\def\thefigure{\thesection.\@arabic\c@figure}
\def\fps@figure{h,t}
\def\thetable{\thesection.\@arabic\c@table}
\def\fps@table{h, t}
\DeclareMathAlphabet\mathbfcal{OMS}{cmsy}{b}{n}
\newtheorem{theorem}{Theorem}
\newtheorem{corollary}[theorem]{Corollary}
\newtheorem{definition}[theorem]{Definition}
\newtheorem{lemma}[theorem]{Lemma}
\newtheorem{proposition}[theorem]{Proposition}
\newtheorem{remark}[theorem]{Remark}
\newtheorem{hypothesis}[theorem]{Hypothesis}
\numberwithin{theorem}{section}
\numberwithin{equation}{section}
\newcommand{\1}{{\bf 1}}
\newcommand{\alg}{{\rm alg}}
\newcommand{\bt}{{\bf t}}
\newcommand{\card}{{\rm card}\,}
\newcommand{\de}{{\rm d}}
\newcommand{\ee}{{\rm e}}
\newcommand{\Gr}{{\rm Gr}}
\newcommand{\Hom}{{\rm Hom}}
\newcommand{\id}{{\rm id}}
\newcommand{\ie}{{\rm i}}
\newcommand{\Ind}{{\rm Ind}}
\newcommand{\jump}{{\rm jump}\,}
\newcommand{\Ker}{{\rm Ker}\,}
\newcommand{\LC}{{\rm LC}}
\newcommand{\Subquot}{{\rm SQ}}
\newcommand{\spa}{{\rm span}\,}
\newcommand{\supp}{{\rm supp}\,}
\newcommand{\Tr}{{\rm Tr}\,}
\newcommand{\dual}[2]{\langle #1, #2\rangle}
\newcommand{\norm}[1]{\Vert #1 \Vert }
\newcommand{\scalar}[2]{(#1 \mid#2) }
\newcommand{\CC}{{\mathbb C}}
\newcommand{\KK}{{\mathbb K}}
\newcommand{\NN}{{\mathbb N}}
\newcommand{\RR}{{\mathbb R}}
\newcommand{\TT}{{\mathbb T}}
\newcommand{\Ac}{{\mathcal A}}
\newcommand{\Bc}{{\mathcal B}}
\newcommand{\Cc}{{\mathcal C}}
\newcommand{\Dc}{{\mathcal D}}
\newcommand{\Hb}{{\mathbfcal H}}
\newcommand{\Hc}{{\mathcal H}}
\newcommand{\Ic}{{\mathcal I}}
\newcommand{\Jc}{{\mathcal J}}
\newcommand{\Kc}{{\mathcal K}}
\newcommand{\Sc}{{\mathcal S}}
\newcommand{\Tc}{{\mathcal T}}
\newcommand{\Uc}{{\mathcal U}}
\newcommand{\Vc}{{\mathcal V}}
\newcommand{\Wc}{{\mathcal W}}
\newcommand{\Xc}{{\mathcal X}}
\newcommand{\Yc}{{\mathcal Y}}
\newcommand{\Ig}{{\mathfrak I}}
\newcommand{\Pg}{{\mathfrak P}}
\newcommand{\Sg}{{\mathfrak S}}
\renewcommand{\gg}{{\mathfrak g}}
\newcommand{\kg}{{\mathfrak k}}
\newcommand{\mg}{{\mathfrak m}}
\newcommand{\pg}{{\mathfrak p}}
\title[$C^*$-algebras of completely solvable Lie groups]{The $C^*$-algebras of completely solvable Lie groups are solvable}
\author{Ingrid Belti\c t\u a}
\author{Daniel Belti\c t\u a}
\address{Institute of Mathematics ``Simion Stoilow'' of the Romanian Academy,
	P.O. Box 1-764, Bucharest, Romania}
\email{Ingrid.Beltita@imar.ro, ingrid.beltita@gmail.com}
\email{Daniel.Beltita@imar.ro, beltita@gmail.com}
\keywords{completely solvable Lie group; solvable $C^*$-algebra}
\subjclass[2020]{Primary 22E27; Secondary 17B30, 46L05, 46L55}
\thanks{We acknowledge financial
	support from the Research Grant GAR 2023 (code 114), supported from the Donors’
	Recurrent Fund of the Romanian Academy, managed by the ”PATRIMONIU”
	Foundation.}
\dedicatory{To Karl-Hermann Neeb on the occasion of his 60th birthday}
\begin{document}
	
	\parskip=5pt

\begin{abstract}
	We prove that if  a connected and simply connected  Lie group $G$ admits connected closed normal subgroups $G_1\subseteq G_2\subseteq \cdots \subseteq G_m=G$ with $\dim G_j=j$ for $j=1,\dots,m$, then its group $C^*$-algebra has  closed two-sided ideals $\{0\}=\Jc_0\subseteq \Jc_1\subseteq\cdots\subseteq\Jc_n=C^*(G)$ 
	with $\Jc_j/\Jc_{j-1}\simeq \Cc_0(\Gamma_j,\Kc(\Hc_j))$ 
	for a suitable locally compact Hausdorff space $\Gamma_j$ and a separable complex Hilbert space $\Hc_j$, where $\Cc_0(\Gamma_j,\cdot)$ denotes the continuous mappings on $\Gamma_j$ that vanish at infinity, and $\Kc(\Hc_j)$ is the $C^*$-algebra of compact operators on~$\Hc_j$ for $j=1,\dots,n$. 
	\end{abstract}

	\maketitle

	\section{Introduction}

	In this paper we establish a direct relation between solvability properties of two very different mathematical objects: 
	\begin{enumerate}[1.]
	\item 
	A  \emph{completely solvable}
 Lie group $G$, i.e., a connected and simply connected (real) Lie group
	whose Lie algebra $\gg$ has a chain of ideals $\{0\}=\gg_0\subseteq\gg_1\subseteq\cdots\subseteq\gg_m=\gg$ with $\dim(\gg_j/\gg_{j-1})=1$, $j=1,\dots,m$. 
	Clearly $m=\dim\gg$. 
	\item
	A \emph{solvable} $C^*$-algebra $\Ac$, i.e., 
	a $C^*$-algebra that has a finite chain of closed two-sided ideals $\{0\}=\Jc_0\subseteq\Jc_1\subseteq\cdots\subseteq\Jc_n=\Ac$ 
	with $\Jc_j/\Jc_{j-1}\simeq  \Cc_0(\Gamma_j, \Kc(\Hc_j))$ 
	for suitable locally compact Hausdorff spaces $\Gamma_j$ and separable complex Hilbert space $\Hc_j$, $j=1,\dots,n$.
	The least number $n$ for which there exists a chain of ideals as above is called the \emph{length} of~$\Ac$. 
	\end{enumerate}
	Here $\Cc_0(\Gamma_j,\cdot)$ denotes the continuous mappings on $\Gamma_j$ that vanish at infinity, and $\Kc(\Hc_j)$ is the $C^*$-algebra of compact operators on~$\Hc_j$. 
	With the above terminology, one of the main results of this paper (Corollary~\ref{solv}) is that if a Lie group $G$ is completely solvable, 
	then its group $C^*$-algebra~$C^*(G)$ is solvable. 
	In the special case of nilpotent Lie groups, that result was obtained in \cite[Th. 4.11]{BBL16} by a  method that required global canonical coordinates on coadjoint orbits. (See also \cite{GK22}.)
	In contrast, the present approach relies on the continuous selections of polarizations constructed in \cite{BB24} along with the technique of ultrafine layerings
	initiated in \cite{CuPn89} and \cite{Cu92b}. 
	(See also \cite{ArCu20} and \cite{ArCu24}.)
	
	This result is remarkable since,  
 	as already noted in \cite{Dy78}, the $C^*$-algebra of a solvable Lie
 	 group may not be solvable, a fact
	 further discussed in \cite{Pe85}. 
	A weaker condition, in which the successive quotients $\Jc_j/\Jc_{j-1}$ should be continuous-trace $C^*$-algebras, was verified in  \cite{Cu92b} for the wider class of exponential Lie groups. 
	
	Even in the special case when $G$ is a nilpotent Lie group,  the relation between the length of $C^*(G)$ 
	and various algebraic data of $G$, such as the nilpotency step, remains to be clarified.
For example,
the length of~$C^*(G)$ is equal to~1 if and only if $G$ is commutative, irrespective of the dimension of~$G$. 
Nontrivial examples already appear for 2-step nilpotent Lie groups: 
for every positive integer $n\ge 1$, if $H_{2n+1}$ denotes the $(2n+1)$-dimensional Heisenberg group, then the length of $C^*(H_{2n+1})$ is equal to~2. 
(See for instance \cite{BB21}, \cite{BB24_RIMS}, and the references therein.)
Moreover, if $G$ is the free 2-step nilpotent Lie group with 3 generators, then $\dim G=6$ while the length of $C^*(G)$ is again equal to~2, cf. \cite[Ex. 6.3.5]{Ech96} and also \cite[Ex. 7.2]{BB24}. 
Other examples can be found in 	\cite{BB21}.

	The structure of the present paper is as follows: 
	Section~\ref{Sect_prel} contains preliminaries on the Pedersen ideal of a $C^*$-algebra and continuous selections of Haar measures on the closed subgroups of a locally compact group. 
	Section~\ref{Sect_fields} contains a construction of continuous fields of Hilbert spaces and our main technical result (Theorem~\ref{e-cont-field}) gives sufficient conditions 
	for a con\-tin\-u\-ous-trace subquotient of $C^*(G)$ to be Morita-equivalent to a commutative $C^*$-algebra if $G$ is an exponential Lie group. 
	This result is motivated by the continuous-trace subquotient conjecture \cite[Conj. 4.18]{RaRo98} to the effect that every continuous-trace subquotient of $C^*(G)$ is Morita-equivalent to a commutative $C^*$-algebra, which is still an open problem. 
    (See also \cite[Rem. 3.3]{Cu92b}.) 
    Finally, Section~\ref{section4} includes our main results (Theorem~\ref{finetriv} and Corollary~\ref{solv}).

	\subsection*{General notation}
	We denote the connected and simply connected  Lie groups by upper case Roman letters and their Lie algebras by the corresponding
	lower case Gothic letters. 
	By a solvable/nilpotent Lie group we always mean
	a \textit{connected and simply connected} solvable/nilpotent Lie group, unless otherwise specified.
	
	An exponential Lie group is a Lie group $G$ whose exponential map
	$\exp_G \colon\gg \to G$ is bijective. 
	All exponential Lie groups are solvable, and completely solvable Lie groups are exponential. 
	(See for instance \cite[\S 14.4]{HN12}.)
	We emphasize that in this paper, all completely solvable Lie groups are connected and simply connected.

	For any Lie algebra $\gg$ with its linear dual space $\gg^*$ we denote by $\langle\cdot,\cdot\rangle\colon\gg^*\times\gg\to\RR$ the corresponding duality pairing.  
	We often denote the group actions simply by juxtaposition, and in particular this is the case for the coadjoint action $G\times\gg^*\to\gg^*$, $(g,\xi)\mapsto g\xi$. 
	
	For every $C^*$-algebra $\Ac$ we denote by $\Subquot^\Tr(\Ac)$ the set of its $*$-isomorphism classes of continuous-trace subquotients, that is, $*$-isomorphism classes of the $C^*$-algebras $\Jc_2/\Jc_1$ which have continuous trace, where $\Jc_1\subseteq\Jc_2$ are closed two-sided ideals of $\Ac$. 
	By abuse of notation, we write $\Sc\in \Subquot^\Tr(\Ac)$ 
	for $\Sc:=\Jc_2/\Jc_1$ as above. 
	We refer to \cite[Rem. 2.4]{BBL16} for the bijection between subquotients and locally closed subsets of the dual space $\widehat{\Ac}$ consisting of equivalence classes of irreducible $*$-representations of~$\Ac$. 
	We denote by $\LC^{\Tr}(\widehat{\Ac})$ the set of all locally closed subsets of $\widehat{\Ac}$ which are dual spaces of continuous-trace subquotients of $\Ac$. 
	If $\Ac=C^*(G)$ for a solvable Lie group $G$, then we write $\LC^{\Tr}(\widehat{G})$ instead of $\LC^{\Tr}(\widehat{\Ac})$, because of the canonical identification of $\widehat{\Ac}$ with the unitary dual $\widehat{G}$ consisting of equivalence classes of irreducible unitary representations of~$G$.

	For every unitary representation $\pi$ we denote by $[\pi]$ its unitary equivalence class. 
	We use similar notation for $*$-representations of associative Banach $*$-algebras, in particular for $C^*$-algebras. 
	Our general references for $C^*$-algebras and Morita equivalence are \cite{Di64} and \cite{RW98}, 
	and we refer to \cite{Pu68}, \cite{BCDLRRV72}, and \cite{ArCu20} for representation theory of exponential Lie groups.

	\section{Preliminaries}
	\label{Sect_prel}
	
	\subsection{On the smallest dense ideal of a continuous-trace $C^*$-algebra}
	
	\begin{definition}
		\normalfont 
		Let $\Ac$ be any $C^*$-algebra with $\Ac^{+}:=\{a\in \Ac\mid 0\le a\}$. 
		We define $K_0^{+}$ as the set of all $a\in \Ac^{+}$ for which there exist
		$b\in \Ac^{+}$ and a 
		continuous function $\varphi\colon (0, \infty) \to [0, \infty)$ with compact support
		such that $a=\varphi(b)$.
		We also define 
		$$
		K_\Ac^{+} :=\{a\in \Ac^{+}\mid (\exists n\in\NN)(\exists a_1,\dots,a_n\in K_0^{+})\quad a\le a_1+\cdots+a_n\}.$$
		Then $K_\Ac :=\spa K_\Ac^{+}$ is called the \emph{Pedersen ideal} of the $C^*$-algebra $\Ac$. 
	\end{definition}
	
	\begin{definition}
		\normalfont 
		For any $C^*$-algebra $\Ac$ we denote by $J_\Ac$ 
		the set of all $a\in \Ac$ 
		for which $\sup\{\dim\pi(a)<\infty\mid [\pi]\in\widehat{\Ac}\}$ 
		and the function $[\pi]\mapsto \Tr\pi(a)$ 
		vanishes outside some quasi-compact set. 
	\end{definition}
	
	\begin{definition}
		\normalfont 
		For any $C^*$-algebra $\Ac$ we denote by $\mg_\Ac^{+}$ 
		the set of all $a\in \Ac^{+}$ 
		for which the function $[\pi]\mapsto \Tr\pi(a)$ 
		is continuous and finite on $\widehat{\Ac}$. 
		We also denote by $\mg_\Ac$ the linear span of $\mg_\Ac^{+}$, 
		which is a self-adjoint two-sided ideal of~$A$ with $\mg_A\cap \Ac^{+}=\mg_\Ac^{+}$. 
	\end{definition}
	
	\begin{proposition}\label{PI}
		The following assertions hold for any $C^*$-algebra $\Ac$: 
		\begin{enumerate}[{\rm(i)}]
			\item\label{PI_item0} 
			The set $J_\Ac$ is a self-adjoint two-sided ideal of~$\Ac$. 
			\item\label{PI_item1} 
			The set $K_\Ac$ is the smallest element of the set of all dense self-adjoint two-sided ideals of $\Ac$, 
			 one has $K_\Ac\cap \Ac^{+}=K_\Ac^{+}$, and moreover $K_\Ac^{+}$ is dense in $\Ac^+$. 
			\item\label{PI_item2} 
			If $\Phi\colon \Ac\to \Bc$ is any surjective $*$-morphism of $C^*$-algebras, then $\Phi(K_\Ac)=K_\Bc$ and $\Phi(K_\Ac^+)=K_\Bc^+$. 
			\item\label{PI_item3} 
			If $\Ac$ has continuous trace and every compact subset of $\widehat{\Ac}$ has finite covering dimension, 
			then $K_\Ac=J_\Ac$. 
			\item\label{PI_item4} 
			The $C^*$-algebra $\Ac$ has continuous trace if and only if $K_\Ac\cap \Ac^{+}\subseteq \mg_\Ac^{+}$.  
			\item\label{PI_item5} 
			If $\Ac$ has continuous trace and every compact subset of $\widehat{\Ac}$ has finite covering dimension, 
			then $J_\Ac\cap \Ac^{+}\subseteq \mg_\Ac^{+}$. 
			\end{enumerate}
	\end{proposition}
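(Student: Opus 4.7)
The plan is to combine standard properties of the Pedersen ideal with classical characterisations of continuous-trace $C^*$-algebras; items (iv)--(vi) are the substantive content, while (i)--(iii) are dispatched quickly.

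For (i), I would verify both properties directly. Self-adjointness follows from $\pi(a^*)=\pi(a)^*$, so that $\dim\pi(a^*)=\dim\pi(a)$ and $\Tr\pi(a^*)=\overline{\Tr\pi(a)}$, hence the quasi-compact support of the trace function is preserved under involution. For the ideal property I would use $\pi(ab)\Hc\subseteq\pi(a)\Hc$ and $\pi(ba)\Hc\subseteq\pi(b)\pi(a)\Hc$, yielding $\dim\pi(ab),\dim\pi(ba)\le\dim\pi(a)$, while the trace function of a product is supported where $\pi(a)\ne 0$. Closure under sums uses that the union of two quasi-compact sets is quasi-compact. Items (ii) and (iii) I would simply invoke from Pedersen's monograph on $C^*$-algebras, where they appear as the foundational properties of $K_\Ac$: minimality among dense self-adjoint two-sided ideals, density of $K_\Ac^+$ in $\Ac^+$, and preservation under surjective $*$-morphisms.

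The heart is (iv). The inclusion $K_\Ac\subseteq J_\Ac$ is tractable: for an element $\varphi(b)\in K_0^+$ with $\supp\varphi$ a compact subset of $(0,\infty)$, each $\pi(\varphi(b))=\varphi(\pi(b))$ has finite rank (only finitely many eigenvalues of $\pi(b)$ lie above the lower end of $\supp\varphi$), the rank function $[\pi]\mapsto\dim\pi(\varphi(b))$ is uniformly bounded on the image's support, and the support is quasi-compact inside $\{[\pi]:\Vert\pi(b)\Vert\ge\epsilon\}$. The reverse inclusion $J_\Ac\subseteq K_\Ac$ is where the finite covering-dimension hypothesis intervenes: given $a\in J_\Ac^+$, I would cover its trace-support by finitely many open sets over which the continuous-trace structure trivialises as $\Cc_0(U,\Kc)$, use the bound on covering dimension to refine to a finite partition of unity, and on each piece realise $a$ as $\varphi(b)$ via functional calculus; summing up expresses $a$ as a finite sum of elements in $K_0^+$. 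I would quote this equality from the standard continuous-trace literature (Dixmier, \cite{RW98}) rather than re-derive it.

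For (v), the forward direction uses that each generator $\varphi(b)\in K_0^+$ has continuous finite trace function: continuous trace provides a dense set of positive elements whose trace functions are continuous and finite, and compactly supported functional calculus keeps us in this class. Since $\mg_\Ac^+$ is a hereditary cone (a face of~$\Ac^+$), the bound $0\le a\le a_1+\cdots+a_n$ forces $a\in\mg_\Ac^+$. The reverse direction uses density of $K_\Ac^+$ in $\Ac^+$ from (ii): the inclusion $K_\Ac^+\subseteq\mg_\Ac^+$ exhibits the dense set required by the definition of continuous trace. Finally, (vi) is formal: by (iv), $J_\Ac=K_\Ac$, so (v) gives $J_\Ac\cap\Ac^+=K_\Ac\cap\Ac^+\subseteq\mg_\Ac^+$.

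The main obstacle is the reverse inclusion in (iv), where the finite covering-dimension hypothesis is genuinely used to patch local expressions $\varphi(b)$ into a globally defined element of $K_0^+$; I would rely on the established continuous-trace machinery rather than reproving it.
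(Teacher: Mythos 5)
Your items (i)--(iii) and (vi) follow the same route as the paper ((i) directly from the definition, (ii)--(iii) quoted from Pedersen \cite{P79}, (vi) formal from (iv)--(v)), but the two substantive items have problems. In (v), forward direction, your key step is the claim that every generator $\varphi(b)\in K_0^{+}$ (with $b\in\Ac^{+}$ \emph{arbitrary}) has continuous finite trace because ``continuous trace provides a dense set of positive elements with continuous finite trace, and compactly supported functional calculus keeps us in this class.'' As stated this begs the question: if ``this class'' is $\mg_\Ac^{+}$, then closure of it under functional calculus applied to arbitrary positive $b$ is exactly the assertion to be proved (restricted to generators), and density of $\mg_\Ac^{+}$ cannot bridge the gap, since $\mg_\Ac^{+}$ is not norm-closed, so approximating $b$ by elements of $\mg_\Ac^{+}$ and using hereditarity of $\mg_\Ac^{+}$ (which is correct) yields nothing about $\varphi(b)$ itself. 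The paper closes this in one line with the minimality you already invoked in (ii): continuous trace means $\mg_\Ac$ is a dense self-adjoint two-sided ideal, hence $K_\Ac\subseteq\mg_\Ac$, hence $K_\Ac\cap\Ac^{+}\subseteq\mg_\Ac\cap\Ac^{+}=\mg_\Ac^{+}$ (this is the by-product of the proof of \cite[Th.\ 17]{Gr77} the paper refers to). Your reverse direction agrees with the paper and is fine.

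In (iv), your plan to ``quote the equality from the standard continuous-trace literature (Dixmier, \cite{RW98})'' does not work: neither source contains it. The identity $K_\Ac=J_\Ac$ under the hypothesis that compact subsets of $\widehat{\Ac}$ have finite covering dimension is precisely Gillette--Taylor \cite[Cor.\ 2.8]{GT79}, which is what the paper cites, and the covering-dimension hypothesis is essential there. Moreover, your heuristic for the hard inclusion $J_\Ac\subseteq K_\Ac$ --- covering the trace-support by open sets over which the algebra ``trivialises as $\Cc_0(U,\Kc)$'' --- is not available: a continuous-trace algebra need not be locally of this form (Fell's condition only gives local rank-one projections; trivialisation holds only after stabilisation, by Dixmier--Douady), so the sketch cannot be upgraded to a proof as written. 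Your direct argument for $K_\Ac\subseteq J_\Ac$ is sound in spirit (finite rank of $\varphi(\pi(b))$ uses liminality, which continuous trace provides, and the support sits inside the quasi-compact set $\{[\pi]\mid \Vert\pi(b)\Vert\ge\epsilon\}$), but the asserted uniform bound on $\dim\pi(\varphi(b))$ again needs the continuity of the trace of some $\psi(b)\in K_0^{+}$, i.e.\ item (v) or the machinery of \cite{GT79}, so it is not an independent shortcut. With (v) repaired by the minimality argument and (iv) referred to \cite{GT79}, your outline matches the paper's proof.
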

	
	\begin{proof}
		Assertion~\eqref{PI_item0} is a direct consequence of the definition of $J_\Ac$.
		
		Assertion~\eqref{PI_item1} follows by \cite[Th.~5.6.1 and its proof]{P79}.

		Assertion~\eqref{PI_item2} follows by  the previous assertion, since $\Phi(K_\Ac^+)= K_\Bc^+$.

		Assertion~\eqref{PI_item3} follows by \cite[Cor.2.8]{GT79} along with the alternative description 
		of the Pedersen ideal given in Assertion~\eqref{PI_item1} above. 
		
		Assertion~\eqref{PI_item4} is a by-product of the proof of \cite[Th. 17]{Gr77}. 
		Specifically, if $\Ac$ has continuous trace, then $\mg_\Ac$ is dense in $\Ac$, hence $K_\Ac\subseteq \mg_\Ac$ by Assertion~\eqref{PI_item1}, 
		and then $K_\Ac\cap \Ac^{+} \subseteq \mg_\Ac \cap \Ac^{+}=\mg_\Ac^{+}$. 
		Conversely, assume that $K_\Ac\cap \Ac^{+}\subseteq \mg_\Ac^{+}$. 
		Since $K_\Ac$ is spanned by its positive part $K_\Ac\cap \Ac^{+}$, 
		we obtain $K_\Ac\subseteq \mg_\Ac$. 
		We have that $K_\Ac$ is dense in $\Ac$ by Assertion~\eqref{PI_item1} above, 
		hence also $\mg_\Ac$ is dense in $\Ac$, that is, $\Ac$ has continuous trace. 
		
		Assertion~\eqref{PI_item5} follows by Assertions \eqref{PI_item3}--\eqref{PI_item4}. 
	\end{proof}

\begin{corollary}
\label{isom}
Let $\Sc$ be a continuous-trace separable  $C^*$-algebra, denote $\Gamma:=\widehat{\Sc}$,
and assume the following: 
\begin{itemize}
	\item ${\Hb}_\Gamma = (\Hc_\gamma)_{\gamma \in \Gamma}$ is a continuous field of Hilbert spaces  with its corresponding continuous field of elementary $C^*$-algebras  $\Ac(\Hb_\Gamma)= (\Kc(\Hc_\gamma))_{\gamma \in \Gamma}$ and continuous-trace $C^*$-algebra of global sections~$\Ac_0(\Hb_\Gamma)$.
	\item $\pi_\gamma\colon\Sc\to\Bc(\Hc_\gamma)$ is an irreducible $*$-representation with $[\pi_\gamma]=\gamma$ for every $\gamma\in\Gamma$. 
	\item For every $a\in K_\Sc^+$ we have $(\pi_\gamma(a))_{\gamma\in\Gamma}\in \Ac_0(\Hb_\Gamma)$. 
	\end{itemize} 
Then the mapping $\Sc\to \Ac_0(\Hb_\Gamma)$, $a\mapsto(\pi_\gamma(a))_{\gamma\in\Gamma}$, 
is a well-defined $*$-isomorphism.
\end{corollary}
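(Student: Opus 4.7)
The plan is to construct $\Phi\colon\Sc\to\Ac_0(\Hb_\Gamma)$ by $\Phi(a):=(\pi_\gamma(a))_{\gamma\in\Gamma}$ and verify four properties in turn: well-definedness on all of $\Sc$, the $*$-homomorphism property, injectivity (and hence isometry), and surjectivity. The main obstacle will be surjectivity.

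For the first three properties I would proceed as follows. The hypothesis gives $\Phi(K_\Sc^+)\subseteq\Ac_0(\Hb_\Gamma)$, so by linearity $\Phi$ is well defined on the Pedersen ideal $K_\Sc=\spa K_\Sc^+$, which is dense in $\Sc$ by Proposition~\ref{PI}\eqref{PI_item1}. Since the section norm is dominated by $\sup_\gamma\|\pi_\gamma(a)\|\le\|a\|$ and $\Ac_0(\Hb_\Gamma)$ is complete, $\Phi$ extends continuously to all of $\Sc$. The $*$-homomorphism property is inherited fiberwise from the $\pi_\gamma$, and injectivity is automatic since $\{[\pi_\gamma]:\gamma\in\Gamma\}$ exhausts $\widehat{\Sc}$, so $\bigcap_\gamma\ker\pi_\gamma=\{0\}$; any injective $*$-homomorphism between $C^*$-algebras is then isometric, and in particular $\Phi(\Sc)$ is closed in $\Ac_0(\Hb_\Gamma)$.

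Surjectivity is the nontrivial step. I would exploit that both $\Sc$ and $\Ac_0(\Hb_\Gamma)$ are $\Cc_0(\Gamma)$-algebras via the Dauns--Hofmann theorem, with $\Phi$ intertwining the two actions because $\pi_\gamma(f\cdot a)=f(\gamma)\pi_\gamma(a)$ on both sides for every $f\in\Cc_0(\Gamma)$. Hence $\Phi(\Sc)$ is a closed $\Cc_0(\Gamma)$-submodule of $\Ac_0(\Hb_\Gamma)$, and continuous-trace $C^*$-algebras being liminal forces the full-fiber condition $\Phi(\Sc)(\gamma)=\pi_\gamma(\Sc)=\Kc(\Hc_\gamma)$ at every $\gamma\in\Gamma$. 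Surjectivity then reduces to the standard local-to-global fact that any closed $\Cc_0(\Gamma)$-submodule of $\Ac_0(\Hb_\Gamma)$ with full fibers equals $\Ac_0(\Hb_\Gamma)$; this is established by approximating an arbitrary $s\in\Ac_0(\Hb_\Gamma)$ through a partition of unity subordinate to a locally finite cover of $\Gamma$ on whose members $s$ is uniformly close to elements of $\Phi(\Sc)$, with vanishing at infinity promoting the local approximation to a global one.
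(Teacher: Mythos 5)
Your proposal is correct, and its first three steps (well-definedness of $a\mapsto(\pi_\gamma(a))_{\gamma\in\Gamma}$ into $\Ac_0(\Hb_\Gamma)$ via density of the Pedersen ideal from Proposition~\ref{PI}, the fiberwise $*$-morphism property, and injectivity because $\{[\pi_\gamma]\mid\gamma\in\Gamma\}$ exhausts $\widehat{\Sc}$) coincide with the paper's argument, which passes through $K_\Sc^{+}$ dense in $\Sc^{+}$ rather than $K_\Sc$ dense in $\Sc$ --- an immaterial difference. Where you genuinely diverge is surjectivity: the paper observes that the fiber evaluations are exactly the irreducible representations of $\Ac_0(\Hb_\Gamma)$ and that the image of $\Sc$ is therefore a \emph{rich} $C^*$-subalgebra, and then invokes the noncommutative Stone--Weierstrass theorem \cite[Lemma 11.1.4]{Di64}; you instead use the Dauns--Hofmann theorem to view both algebras as $\Cc_0(\Gamma)$-algebras intertwined by $\Phi$, note that liminality of the continuous-trace algebra $\Sc$ gives full fibers $\pi_\gamma(\Sc)=\Kc(\Hc_\gamma)$, and conclude by the local-to-global fact that a closed $\Cc_0(\Gamma)$-submodule of $\Ac_0(\Hb_\Gamma)$ with full fibers is everything (a partition-of-unity argument in the spirit of \cite[10.5.4]{Di64}, with vanishing at infinity reducing matters to a compact set so that even paracompactness, which is in any case available here by separability, is not really needed). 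Both routes rest on the same structural inputs (continuous trace implies liminal, and the spectrum of $\Ac_0(\Hb_\Gamma)$ is $\Gamma$ with evaluations as its irreducibles); the paper's is shorter once Glimm's Stone--Weierstrass theorem is accepted as a black box, while yours is more elementary and self-contained, at the cost of invoking Dauns--Hofmann and spelling out the gluing argument. Just make sure, when writing it up, to justify the module intertwining identity $\pi_\gamma(f\cdot a)=f(\gamma)\pi_\gamma(a)$ (the defining property of the Dauns--Hofmann action, using $\widehat{\Sc}=\Gamma$ Hausdorff) and to cite or prove the full-fiber local-to-global lemma explicitly.
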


\begin{proof}
We consider the $*$-morphism
$$\Phi\colon \Sc\to \prod\limits_{\gamma}\Kc(\Bc_\gamma),\quad \Phi(a):=(\pi_\gamma(a))_{\gamma\in\Gamma}$$
which takes values as indicated, since $\Sc$ is a liminal $C^*$-algebra by \cite[Prop. 4.5.3(i)]{Di64}. 
Moreover, we have $\Ker\Phi=\{0\}$ since $\widehat{\Sc}=\{[\pi_\gamma]\mid\gamma\in\Gamma\}$. 

It remains to show that $\Phi(\Ac)=\Ac_0(\Hb_\Gamma)$. 
To this end we recall from Proposition~\ref{PI}\eqref{PI_item1} that $K_\Sc^+$ is dense in $\Sc^+$, while $\Phi(K_\Sc^+)\subseteq \Ac_0(\Hb_\Gamma)$ by hypothesis, 
hence $\Phi(\Sc^+)\subseteq \Ac_0(\Hb_\Gamma)$. 
Since every element of the $C^*$-algebra $\Sc$ is a linear combination of positive elements, 
we then obtain $\Phi(\Sc)\subseteq \Ac_0(\Hb_\Gamma)$. 

We now recall that $\Ac_0(\Hb_\Gamma)$ is a liminal $C^*$-algebra whose spectrum is equal to~$\Gamma$, cf. \cite[10.9.2]{Di64}, and for every $\gamma\in \Gamma$, $\Ac_0(\Hb_\Gamma)\ni a\mapsto a(\gamma) \in \Kc(\Hc_\gamma)$ is the unique irreducible representation, modulo unitary equivalence, corresponding to $\gamma$. 
It follows that the $C^*$-subalgebra $\Phi(\Sc)\subseteq \Ac_0(\Hb_\Gamma)$ is rich, hence $\Phi(S)= \Ac_0(\Hb_\Gamma)$ by the noncommutative Stone-Weierstrass theorem \cite[Lemma 11.1.4]{Di64}. 
\end{proof}

\subsection{Chabauty-Fell topology on closed subgroups, Haar measures}
	
	Let $G$ be a locally compact group with its unit element denoted by $\1\in G$ and its set of closed subgroups denoted by~$\Sigma(G)$. 
	The set $\Sigma(G)$ is regarded as a compact Hausdorff topological space, 
	endowed with its Chabauty-Fell topology (cf. \cite[Prop. 2(vi)]{dlH08} or \cite[Cor. H.4]{Wi07}). 
	We recall that a basis of open sets of the Chabauty-Fell topology consists of the sets 
	$$\Uc(C, \Tc)= \{ K\in \Sigma(G)\mid K\cap C=\emptyset, \; K \cap A\ne \emptyset, \forall
	\, A\in \Tc\},$$
	where $C$ runs over the compact subspaces of $G$ and $\Tc$ over finite families of nonempty open 
	subsets of $G$. 
We also define 
$$\Sigma_0(G):=\{K\in\Sigma(G)\mid K\text{ is connected}\}$$ 
and
$$\Tc(G):=
\{(K,g)\in\Sigma(G)\times G\mid g\in K\}.$$
One can prove that $\Tc(G)$ is a closed subset of $\Sigma(G)\times G$. 

	\begin{remark}\label{eqtop}
		\normalfont
	Assume that $G$ is an exponential Lie group with its Lie algebra $\gg$. 
	Let $\Gr(\gg)$ be the Grassmann manifold of $\gg$, whose points are the linear subspaces of $\gg$. 
	We also consider the subset $\Gr_\alg(\gg)$ whose points are the subalgebras of the Lie algebra~$\gg$. 
	The smooth manifold $\Gr(\gg)$ is compact and 
	$\Gr_\alg(\gg)$ is a closed subset, hence $\Gr_\alg(\gg)$  is in turn a compact space. 
	(See \cite[Sect. 6]{BB24}.) 
Then the mapping $\Gr_\alg(\gg)\to\Sigma(G)$, $\kg\mapsto\exp_G(\kg)$, 
is a homeomorphism from $\Gr_\alg(\gg)$ onto $\Sigma_0(G)$. 
		(See e.g., \cite[Lemma~2.4]{GS99}.)
	\end{remark}

	We denote by $\Cc_c(G)$ the set of $\CC$-valued continuous functions on $G$ having compact support and we fix a function $0\le\varphi_0\in\Cc_c(G)$ with $\varphi_0(\1)=1$. 
	
	For every $K\in\Sigma(G)$ we introduce the following notation, cf. \cite[Sect. C.1]{RW98}: 
	\begin{itemize}
		\item  $\mu_K$ is the left-invariant Haar measure of $K$ satisfying $\int\limits_K\varphi_0(k)\de\mu_K(k)=1$. 
		\item $\Delta_K\colon K\to\RR_+^\times$ is the modular function of $K$, that is, 
		the continuous group homomorphism satisfying 
		\begin{align}
		\label{right}
		\int\limits_K\psi(k)\de\mu_K(k)& =\Delta_K(k_0)\int\limits_K\psi(kk_0)\de\mu_K(k), \\
		\label{inv}
		\int\limits_K\psi(k)\de\mu_K(k)& =\int\limits_K\psi(k^{-1})\Delta_K(k^{-1})\de\mu_K(k)
	\end{align}
for every $k_0\in K$ and $\psi\in\Cc_c(K)$. 
		\item $\omega_K\colon G\to \RR_+^\times$, $\omega_K(g):=\int\limits_K\varphi_0(gkg^{-1})\de\mu_K(k)$. 
		Note that $\omega_K\vert_{K}= \Delta_K$, and $\omega_K(gh) =\omega_K(g) \omega_K(h)$ for every $g\in G$ and $h \in K$. 
		\item $\rho_K\colon G\to \RR_+^\times$, $\rho_K(g):=\Delta_G(g^{-1})\omega_K(g)$. 
		Note that $\rho_K\vert_{K}= \Delta_K (\Delta_G\vert_{K})^{-1}$, and 
		$\rho_K(gh) =\rho_K(g) \rho_K(h)$ for every $g\in G$ and $h \in K$. 
		\item $\nu_K$ is the positive Radon measure on $G/K$ satisfying 
		\begin{equation}
		\label{rho_eq0}
		\int\limits_G\varphi(g)\rho_K(g)\de\mu_G(g)
		=\int\limits_{G/K}\Bigl(\int\limits_K \varphi(gk)\de\mu_K(k)\Bigr)\de\nu_K(gK)
		\end{equation}
	for every $\varphi\in\Cc_c(G)$. 
	\end{itemize}
We call the above family of measures $(\mu_K)_{K\in\Sigma(G)}$  a \emph{Haar system} on the closed subgroups of~$G$ and for every $K\in \Sigma(G)$ we regard $\mu_K$ as a Radon measure on~$G$ supported by~$K$.
 
We now give a version of \cite[Lemma H.9]{Wi07} adapted to our purposes in the present paper. 
For any locally compact Hausdorff space $X$ we endow $\Cc_c(X)$ 
with the inductive limit topology as inductive limit of the Banach spaces $\Cc_C(X):=\{f\in\Cc_c(X)\mid \supp f\subseteq C\}$ for arbitrary compact $C\subseteq X$. 
(See e.g., \cite[Ch. 4, Part 1, \S 2, Ex. (d)]{Gro73}.)

\begin{lemma}
	\label{Wi07_LH.9}
	If $G$ is a locally compact group and $(\mu_K)_{K\in\Sigma(G)}$ is a Haar system on the closed subgroups of $G$, then 
	the following assertions hold:  
	\begin{enumerate}[{\rm(i)}]
		\item\label{Wi07_LH.9_item1}
		The mapping 
		$$\Ic\colon \Cc_c(G)\to\Cc(\Sigma(G)),\quad (\Ic(f))(K):=\int f\de\mu_K$$
		is well defined, linear, and continuous. 
		\item\label{Wi07_LH.9_item2}
		For every compact subset $C\subseteq G$ 
		the mapping 
		$$\widehat{\Ic}_C\colon \Sigma(G)\times\Cc_C(G)\to\CC,\quad 
		\widehat{\Ic}_C(K,f):=\int f\de\mu_K$$
		is continuous. 
		\item\label{Wi07_LH.9_item3} 
		The subset $Y:=\{(K,k)\in\Sigma(G)\times G\mid k\in K\}\subseteq \Sigma(G)\times G$ is closed and 
		the function 
		$$\widetilde{\Delta}\colon Y\to\CC,\quad \widetilde{\Delta}(K,k):=\Delta_K(k)$$
		is continuous. 
		\item\label{Wi07_LH.9_item4} 
		If $X$ is a locally compact space, then for every $F\in\Cc_c(X\times\Sigma(G)\times G)$ its corresponding function 
		\begin{equation}
			\label{Wi07_LH.9_item3_eq1} 
			\widetilde{F}\colon X\times \Sigma(G)\to\CC,\quad \widetilde{F}(x,K):=\int F(x,K,k)\de\mu_K(k)
		\end{equation}
		is continuous. 
	\end{enumerate}
\end{lemma}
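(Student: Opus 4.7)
The plan is to treat assertion~\eqref{Wi07_LH.9_item1} as essentially the classical continuity statement for a normalized Haar system, namely~\cite[Lemma~H.9]{Wi07}: the normalization $\int\varphi_0\,\de\mu_K=1$ with $\varphi_0(\1)=1$ rigidly determines $\mu_K$ so that $K\mapsto\mu_K$ is weak-$*$ continuous on $\Sigma(G)$. Well-definedness of $\Ic$ is immediate since $f\vert_K$ is continuous with compact support. Continuity with respect to the inductive limit topology reduces to continuity on each Banach space $\Cc_C(G)$ with the sup norm, and for any $\psi\in\Cc_c(G)$ dominating $\1_C$ one has $\vert\Ic(f)(K)\vert\le\norm{f}_\infty\Ic(\psi)(K)$; since $\Ic(\psi)$ is continuous on the compact space~$\Sigma(G)$, this simultaneously furnishes the uniform bound $M_C:=\sup_{K\in\Sigma(G)}\mu_K(C)<\infty$ that I exploit below.

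Assertion~\eqref{Wi07_LH.9_item2} then follows from the standard splitting
$$\vert\widehat{\Ic}_C(K,f)-\widehat{\Ic}_C(K_0,f_0)\vert\le M_C\norm{f-f_0}_\infty+\vert\Ic(f_0)(K)-\Ic(f_0)(K_0)\vert,$$
whose first term vanishes by the bound $M_C<\infty$ and whose second vanishes by~\eqref{Wi07_LH.9_item1}. For~\eqref{Wi07_LH.9_item3}, closedness of $Y$ is a direct diagonal argument in the Chabauty-Fell topology: if $k_0\notin K_0$, a compact neighborhood $C$ of $k_0$ with $C\cap K_0=\emptyset$ gives $\Uc(C,\emptyset)$ as a neighborhood of $K_0$ that forbids $(K,k)\in Y$ once $K$ is close enough to $K_0$ and $k$ is close enough to $k_0$. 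The continuity of $\widetilde{\Delta}$ then comes from~\eqref{right}: fixing $(K_0,k_0)\in Y$ and choosing a nonnegative $\psi\in\Cc_c(G)$ with $\psi(k_0)>0$, one has
$$\Delta_K(k')=\frac{\int\psi\,\de\mu_K}{\int\psi(\,\cdot\, k')\,\de\mu_K}$$
on a neighborhood of $(K_0,k_0)$ in $Y$; as $k'$ ranges over a compact neighborhood of $k_0$, the translates $x\mapsto\psi(xk')$ all lie in a single Banach space $\Cc_{C_1}(G)$ and depend continuously on $k'$ by uniform continuity of~$\psi$, so~\eqref{Wi07_LH.9_item2} makes both integrals jointly continuous in $(K,k')$, and strict positivity at $(K_0,k_0)$ propagates to a neighborhood. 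Finally,~\eqref{Wi07_LH.9_item4} reduces to~\eqref{Wi07_LH.9_item2}: if $C_G$ is the compact $G$-projection of $\supp F$, then uniform continuity of $F$ on its compact support shows that $(x,K)\mapsto F(x,K,\cdot)$ is continuous into $(\Cc_{C_G}(G),\norm{\cdot}_\infty)$, and composing with $\widehat{\Ic}_{C_G}$ yields~$\widetilde{F}$.

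The main obstacle is~\eqref{Wi07_LH.9_item1} itself, i.e., the weak-$*$ continuity of the normalized Haar system, but that is invoked as a standard fact from~\cite{Wi07}; modulo this input, everything else is a matter of joint-continuity estimates, uniform continuity on the compact support, and the diagonal argument with Chabauty-Fell basic open sets.
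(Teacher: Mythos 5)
Your proposal is correct, and for assertions (i), (ii) and (iv) it follows essentially the same route as the paper: the pointwise continuity of $K\mapsto\int f\,\de\mu_K$ is taken as an external standard fact (the paper cites the Appendix of \cite{Gl62} or \cite[Lemma H.8]{Wi07}, you attribute it to \cite[Lemma H.9]{Wi07}, which is the same circle of results), continuity on the inductive limit is reduced to the Banach spaces $\Cc_C(G)$ via a dominating function $\varphi\ge\mathbf{1}_C$ and the compactness of $\Sigma(G)$ (your bound $M_C=\sup_K\mu_K(C)$ is exactly the paper's $\Vert\Ic(\varphi)\Vert_\infty$ in disguise), the splitting estimate for (ii) is identical, and (iv) is obtained in both cases by factoring $\widetilde F$ as $\widehat{\Ic}_{C}\circ(\mathrm{id}\times\widehat F)$ with $\widehat F(x,K)=F(x,K,\cdot)$ continuous into a fixed $\Cc_{C}(G)$. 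The only genuine divergence is (iii): the paper simply refers to \cite[Lemma H.9(c)]{Wi07}, whereas you give a self-contained argument — closedness of $Y$ by the diagonal argument with the basic Chabauty--Fell sets $\Uc(C,\emptyset)$, and continuity of $\widetilde\Delta$ from the quotient formula $\Delta_K(k')=\int\psi\,\de\mu_K\big/\int\psi(\cdot\,k')\,\de\mu_K$ supplied by \eqref{right}, combined with part (ii) and right uniform continuity of $\psi$; you also correctly note the point this argument hinges on, namely that the denominator is strictly positive at $(K_0,k_0)$ (because $\psi(k_0)>0$ and $k_0\in K_0$) and stays so on a neighborhood by joint continuity. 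This buys independence from the cited lemma at the cost of a slightly longer proof; both versions are sound.
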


\begin{proof}
	\eqref{Wi07_LH.9_item1} 
	For every $f\in\Cc_c(G)$ we have $\Ic(f)\in\Cc(\Sigma(G))$  by 
	 \cite[Appendix]{Gl62} or \cite[Lemma H.8]{Wi07},  
	hence the mapping $\Ic$ in the statement is well defined. 
	Moreover, the mapping $\Ic$ is clearly linear. 
	Therefore, by \cite[Ch. 4, Part 1, \S 1, Prop. 2]{Gro73}, it suffices to prove that for an arbitrary compact subset $C\subseteq X$ the linear map 
	$\Ic\vert_{\Cc_C(G)}\colon\Cc_C(G)\to\Cc(\Sigma(G))$ is continuous. 
	
	To this end we first select $0\le \varphi\in\Cc_c(G)$ with 
	$\varphi\vert_C=1$. 
	Then for every $f\in\Cc_C(G)$ we have 
	\begin{equation*}
		\vert f\vert \le \Vert f\Vert_\infty\varphi.
	\end{equation*}
	This further implies 
	\begin{equation*}
		\vert (\Ic(f))(K)\vert=\Bigl\vert\int f\de\mu_K\Bigr\vert
		\le \int\vert f\vert \de\mu_K
		\le \Vert f\Vert_\infty \int\varphi\de\mu_K
		=\Vert f\Vert_\infty  (\Ic(\varphi))(K). 
	\end{equation*}
	Now, since $0\le \Ic(\varphi)\in\Cc(\Sigma(G))$ and  the space $\Sigma(G)$ is compact,  
	it follows that 
	$$\Vert \Ic(\varphi)\Vert_\infty=\sup\limits_{K\in\Sigma(G)}(\Ic(\varphi))(K)<\infty$$
	hence the above inequalities imply 
	\begin{equation}
		\label{Wi07_LH.9_proof_eq1}
		(\forall f\in\Cc_C(G))\quad \Vert \Ic(f)\Vert_\infty\le 
		\Vert \Ic(\varphi)\Vert_\infty\Vert f\Vert_\infty. 
	\end{equation}
	This shows that the linear map 
	$\Ic\vert_{\Cc_C(G)}\colon\Cc_C(G)\to\Cc(\Sigma(G))$ is continuous.

	\eqref{Wi07_LH.9_item2} 
	It is enough to prove that if $\lim\limits_{j\in J}K_j=K$ in $\Sigma(G)$ and $\lim\limits_{j\in J}f_j=f$ in $\Cc_C(G)$ then $\lim\limits_{j\in J}\widehat{\Ic}_C(K_j,f_j)=\widehat{\Ic}_C(K,f)$. 
	For arbitrary $j\in J$ we have 
	\begin{align*}
		\vert\widehat{\Ic}_C(K_j,f_j)-\widehat{\Ic}_C(K,f)\vert
		& \le\vert\widehat{\Ic}_C(K_j,f_j-f)\vert
		+\vert \widehat{\Ic}_C(K_j,f)-\widehat{\Ic}_C(K,f)\vert \\
		&=\vert (\Ic(f_j-f))(K_j)\vert 
		+\vert (\Ic(f))(K_j)-(\Ic(f))(K)\vert \\
		&\le 	\Vert \Ic(\varphi)\Vert_\infty\Vert f_j-f\Vert_\infty
		+\vert (\Ic(f))(K_j)-(\Ic(f))(K)\vert, 
	\end{align*}
	where we used \eqref{Wi07_LH.9_proof_eq1}. 
	Now, using the fact that $\lim\limits_{j\in J}f_j=f$ in $\Cc_C(G)$ and $\Ic(f)\in\Cc(\Sigma(G))$, we obtain  $\lim\limits_{j\in J}\widehat{\Ic}_C(K_j,f_j)=\widehat{\Ic}_C(K,f)$.

	\eqref{Wi07_LH.9_item3}
	See \cite[Lemma H.9(c)]{Wi07}. 
	
	\eqref{Wi07_LH.9_item4} 
	Since $F\in\Cc_c(X\times\Sigma(G)\times G)$, there exists compact subsets $X_0\subseteq X$ and $C\subseteq G$ with $\supp F\subseteq X_0\times\Sigma(G)\times C$. 
	This implies that the function 
	$$\widehat{F}\colon X\times\Sigma(G)\to\Cc_c(G),\quad \widehat{F}(x,K):=F(x,K,\cdot)$$
	is well defined and continuous. 
	Then, by \eqref{Wi07_LH.9_item2} above, the function 
	\begin{align*}\widehat{\Ic}_C\circ(\id_{\Sigma(G)}\times \widehat{F}) 
		& \colon X\times\Sigma(G)\to\CC,\\
		(\widehat{\Ic}_C\circ(\id_{\Sigma(G)}\times \widehat{F})) 
		& (x,K)=\widehat{\Ic}_C(K,\widehat{F}(x,K))
	\end{align*}
	is continuous as a composition of two continuous maps. 
	Since 
	$$\widehat{\Ic}_C(K,\widehat{F}(x,K))=\int F(x,K,k)\de\mu_K(k)
	=\widetilde{F}(x,K),$$
	and this completes the proof. 
\end{proof}

\section{Continuous fields of Hilbert spaces defined by continuous selections of polarizations}
\label{Sect_fields}

In this section we obtain our main technical results that give sufficient conditions for triviality of certain continuous-trace subquotients of group $C^*$-algebras (Theorem~\ref{e-cont-field} and Corollary~\ref{cont-field}). 

\subsection{Continuous fields of Hilbert spaces defined by continuous selections of subgroups}

Let $G$ be a locally compact group.

\begin{hypothesis}\label{e-cont-hyp}
\normalfont 
Throughout this subsection we assume the following:
	\begin{enumerate}[{\rm(i)}]
		\item\label{cont-hyp_item1}   
		$\Gamma\in \LC^{\Tr}(\widehat{G})$, $\Gamma$
has finite covering dimension, and every compact subset of $\Gamma$ has finite covering dimension. 
		\item\label{cont-hyp_item3}   
		We have a continuous  mapping  $\Gamma\to \Sigma(G)$, $\gamma\mapsto P(\gamma)$. 
		 \item\label{cont-hyp_item2}
		For every $\gamma\in \Gamma$, $\chi_\gamma\colon G\to \TT$ is such that $\chi_\gamma\vert_{P(\gamma)}$ is a character and 
		the map $\Gamma\times G\ni (\gamma, g) \mapsto \chi_\gamma(g)\in \TT $ is continuous. 
		\item\label{cont-hyp_item4}   
		For every $\gamma\in \Gamma$, $\pi_\gamma=\Ind_{P(\gamma)}^G \chi_\gamma$ is an infinite-dimensional  irreducible representation and 
		$\gamma=[\pi_\gamma]$.
	\end{enumerate}  
\end{hypothesis}
\begin{remark}\label{paracomp}
\normalfont
In Hypothesis~\ref{e-cont-hyp}, the set
 $\Gamma$ is the spectrum of a continuous-trace separable $C^*$-algebra, 
therefore it is a second countable, locally compact Hausdorff space. 
(See \cite[3.3.4, 3.3.8, and 4.5.3]{Di64}.)
By \cite[Lemma 6.5]{Wi07},
it follows that $\Gamma$ is a (completely) metrizable space, therefore
 paracompact. 
\end{remark}

For every $\gamma\in\Gamma$ 
we set 
$$\mu_\gamma:=\mu_{P(\gamma)},\  \nu_\gamma:= \nu_{G/P(\gamma)},\ 
\rho_\gamma:= \rho_{P(\gamma)},$$ 
with 
$\mu_{P(\gamma)}$, $\nu_{G/P(\gamma)}$, and $ \rho_{P(\gamma)}$ as in Section~\ref{Sect_prel}.

Define
\begin{align*}
	\Dc(G,P(\gamma), \chi_\gamma) = \{ \varphi\in \Cc(G)\mid 
	&
	(\forall g\in G, h\in P(\gamma))\quad  
	\varphi(gh) =\chi_{\gamma} (h)^{-1}\varphi(g),    \\
	&q_\gamma(\supp\,f) \text{ compact}\}, 
\end{align*}
where $q_\gamma\colon G\to G/P(\gamma)$, $g\mapsto \dot g$, is the quotient map, 
and consider the norm 
$$ \norm{f}_\gamma :=\Big(\int\limits_{G/P(\gamma)}\vert\varphi(g)\vert^2\de \nu_\gamma(\dot g)\Big)^{1/2}.
$$
The Hilbert space $\Hc_\gamma$, defined
as the completion of $\Dc(G,P(\gamma), \chi_\gamma)$ with respect to the norm 
$\norm{\cdot}_\gamma$, 
is the representation space of the induced representation 
$\pi_\gamma$.

We now consider the linear mapping $R_\gamma\colon \Cc_c(G) \to \Dc(G,P(\gamma), \chi_\gamma)$ 
given by 
\begin{equation}\label{R1}
 (R_\gamma \varphi)(g):= \int\limits_{P(\gamma)} \rho_\gamma(gh)^{-1/2}\varphi(gh) \chi_\gamma(h) \de \mu_\gamma(h).
\end{equation}
We note that, if $k\in P(\gamma)$ and $g\in G$, then 
\begin{align}
(R_\gamma \varphi)(gk)
&= \int\limits_{P(\gamma)} \rho_\gamma(gkh)^{-1/2}\varphi(gkh) \chi_\gamma(h) \de \mu_\gamma(h) \nonumber \\
&=\int\limits_{P(\gamma)} \rho_\gamma(gh)^{-1/2}\varphi(gh) \chi_\gamma(k^{-1}h) \de \mu_\gamma(h) \nonumber \\
&=\chi_\gamma(k)^{-1}(R_\gamma \varphi)(g),\label{invar}
\end{align}  
hence indeed $R_\gamma \varphi\in \Dc(G,P(\gamma), \chi_\gamma)$. 
The function
$$ \Gamma\times G \ni (\gamma, g) \mapsto (R_\gamma \varphi)(g)\in \CC $$
is continuous by Lemma~\ref{Wi07_LH.9} \eqref{Wi07_LH.9_item4}.
Moreover, the set $\{R_\gamma\varphi\mid \varphi\in \Cc_c(G)\}$ is dense in~$\Hc_\gamma$.
(See the proof of \cite[Th. C.33]{RW98}.)

\begin{remark}\label{e-contfield}
	\normalfont 
	For a family of Hilbert spaces $(\Hc_\gamma)_{\gamma\in \Gamma}$, 
	where $\Gamma$ is a topological space, 
	we will use the notion of continuity structure 
	$F\subseteq\prod\limits_{\gamma\in\Gamma}\Hc_\gamma$ 
	as in \cite[I.1]{Fe61}. 
	By definition, $F$ is a complex linear subspace with the properties that the function $\gamma\mapsto \Vert v_\gamma\Vert_\gamma$ is continuous for every section $v=(v_\gamma)_{\gamma\in\Gamma}\in F$ 
	and moreover the subset $\{v_\gamma\mid v\in F\}\subseteq\Hc_\gamma$ is dense for every $\gamma\in \Gamma$. 
	That notion is equivalently described in \cite[Prop.~10.2.3]{Di64}, 
	which ensures that there is a space  $\widetilde{F}$ of sections of 
	$(\Hc_\gamma)_{\gamma\in \Gamma}$ such that ${\Hb}_\Gamma = ((\Hc_\gamma)_{\gamma \in \Gamma}, \widetilde{F})$ is a continuous field of Hilbert 
	spaces and $F\subseteq\widetilde{F}$. 
	Specifically, $\widetilde{F}$ is the space of sections that can be approximated by sections in $F$, 
	uniformly on neighbourhoods of points in $\Gamma$.
	The space $\Gamma$ is paracompact and has finite covering dimension. 
	It follows by \cite[Lemma~10.8.7]{Di64} that 
	$\Hb_\Gamma$ is a  trivial continuous field of Hilbert spaces, 
	hence its corresponding continuous field of elementary $C^*$-algebras  $\Ac(\Hb_\Gamma)$ is trivial. 
\end{remark}

\begin{proposition}\label{e-contfield-H}
	The set of sections
	$$	
	F:=\{ (R_\gamma \varphi)_{\gamma\in\Gamma}\mid \varphi \in \Cc_c(G)\}\subseteq\prod_{\gamma\in\Gamma}\Hc_\gamma$$
	defines a continuity structure on the field of Hilbert spaces $(\Hc_\gamma)_{\gamma\in \Gamma}$.
\end{proposition}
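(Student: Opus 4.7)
By Remark~\ref{e-contfield}, verifying that $F$ is a continuity structure on $(\Hc_\gamma)_{\gamma\in\Gamma}$ amounts to three checks: that $F$ is a complex linear subspace of $\prod_{\gamma\in\Gamma}\Hc_\gamma$; that for each $\gamma\in\Gamma$ the fibre $\{v_\gamma\mid v\in F\}$ is dense in $\Hc_\gamma$; and that for every $v=(R_\gamma\varphi)_{\gamma\in\Gamma}\in F$ the function $\gamma\mapsto\|R_\gamma\varphi\|_\gamma$ is continuous. Linearity is immediate from the linear definition of $R_\gamma$ in \eqref{R1}, and the pointwise density is the statement recorded in the paragraph just before the proposition. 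All the substance of the proof therefore lies in the continuity of $\gamma\mapsto\|R_\gamma\varphi\|_\gamma$ for each fixed $\varphi\in\Cc_c(G)$.

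My plan is to rewrite this norm as an integral over $P(\gamma)$ against $\mu_\gamma$ of a jointly continuous function of $(\gamma,h)$ whose support in $h$ is a fixed compact set, so that Lemma~\ref{Wi07_LH.9}\eqref{Wi07_LH.9_item4} applies. The key intermediate step is the pairing identity
\[
\langle R_\gamma\varphi,f\rangle_{\Hc_\gamma} = \int_G \rho_\gamma(g)^{1/2}\varphi(g)\overline{f(g)}\,\de\mu_G(g), \qquad f\in\Dc(G,P(\gamma),\chi_\gamma),
\]
which I would derive by using $\chi_\gamma(h)\overline{f(g)}=\overline{f(gh)}$ (a consequence of \eqref{invar} and $|\chi_\gamma|_{P(\gamma)}|=1$) to rewrite the integrand in $\int_{G/P(\gamma)}(R_\gamma\varphi)\overline{f}\,\de\nu_\gamma$ as $\rho_\gamma(gh)^{-1/2}\varphi(gh)\overline{f(gh)}$, and then applying the Weil formula~\eqref{rho_eq0} to the compactly supported function $\rho_\gamma^{-1/2}\varphi\overline{f}\in\Cc_c(G)$. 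Setting $f=R_\gamma\varphi$, expanding $\overline{(R_\gamma\varphi)(g)}$, using $\rho_\gamma(gh)=\rho_\gamma(g)\rho_\gamma(h)$ for $h\in P(\gamma)$, and swapping the order of integration by Fubini then produce
\[
\|R_\gamma\varphi\|_\gamma^2 = \int_{P(\gamma)}\rho_\gamma(h)^{-1/2}\overline{\chi_\gamma(h)}\Phi(h)\,\de\mu_\gamma(h), \qquad \Phi(h):=\int_G\varphi(g)\overline{\varphi(gh)}\,\de\mu_G(g),
\]
with $\Phi\in\Cc_c(G)$ supported in $(\supp\varphi)^{-1}\cdot\supp\varphi$.

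With this identity in hand, continuity in $\gamma$ follows by a direct application of Lemma~\ref{Wi07_LH.9}\eqref{Wi07_LH.9_item4}. The integrand $F(\gamma,h):=\rho_\gamma(h)^{-1/2}\overline{\chi_\gamma(h)}\Phi(h)$ is jointly continuous on $\Gamma\times G$: continuity of $(\gamma,h)\mapsto\chi_\gamma(h)$ is given by Hypothesis~\ref{e-cont-hyp}\eqref{cont-hyp_item2}, while $(\gamma,h)\mapsto\rho_\gamma(h)$, extended to $\Gamma\times G$ via $\rho_\gamma(h):=\Delta_G(h^{-1})\omega_{P(\gamma)}(h)$, is itself continuous by a preliminary application of Lemma~\ref{Wi07_LH.9}\eqref{Wi07_LH.9_item4} to the integrand $\varphi_0(hkh^{-1})$ with a bump cut-off in $h$. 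Since $\Phi$ has compact support, the support of $F$ in $h$ is uniform in $\gamma$. To verify continuity at $\gamma_0\in\Gamma$, one multiplies $F$ by a bump $\eta\in\Cc_c(\Gamma)$ equal to $1$ near $\gamma_0$; pushing this along the continuous map $\gamma\mapsto P(\gamma)$, one obtains a function in $\Cc_c(\Gamma\times\Sigma(G)\times G)$ to which Lemma~\ref{Wi07_LH.9}\eqref{Wi07_LH.9_item4} (with $X=\Gamma$) applies, yielding continuity of $\gamma\mapsto\eta(\gamma)\|R_\gamma\varphi\|_\gamma^2$ at $\gamma_0$ and hence of $\gamma\mapsto\|R_\gamma\varphi\|_\gamma^2$ itself. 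The main technical obstacle is the careful bookkeeping of modular factors needed to derive the pairing identity; once that identity is set up, the continuity argument is a routine assembly of the technical lemma and the continuity inputs in the hypothesis.
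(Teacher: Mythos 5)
Your proof is correct and follows essentially the same route as the paper: the same reduction to the continuity of $\gamma\mapsto\norm{R_\gamma\varphi}_\gamma$, the same use of \eqref{invar} and the Weil formula \eqref{rho_eq0} to arrive at $\norm{R_\gamma\varphi}_\gamma^2=\int_G\rho_\gamma(g)^{1/2}\varphi(g)\overline{(R_\gamma\varphi)(g)}\,\de\mu_G(g)$, and the same reliance on Lemma~\ref{Wi07_LH.9}. The only difference is cosmetic: the paper concludes at that identity using the joint continuity of $(\gamma,g)\mapsto\rho_\gamma(g)^{1/2}$ and $(\gamma,g)\mapsto(R_\gamma\varphi)(g)$ together with integration against the fixed measure $\mu_G$ over $\supp\varphi$, whereas you perform one extra Fubini to express the norm as an integral over $P(\gamma)$ against $\mu_\gamma$ and apply Lemma~\ref{Wi07_LH.9}\eqref{Wi07_LH.9_item4} once more, with a cut-off in the $\Gamma$-variable.
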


\begin{proof} 
	By the discussion above, it suffices to prove that for every $\varphi\in \Cc_c(G)$ the function
	$$ \gamma \mapsto \norm{R_\gamma\varphi}_\gamma$$
	is continuous on $\Gamma$.
	
	To this end let $K\subseteq G$ be a compact subset with $\supp\varphi\subseteq K$. 
	Then there is a compact subset $K_\gamma\subseteq G/P(\gamma)$ with  
	$q_\gamma(\supp (R_\gamma\varphi))\subseteq K_\gamma$, and we  have
	\allowdisplaybreaks
	 \begin{align*}
		\norm{R_\gamma\varphi}^2_\gamma 
		= &
		\int\limits_{K_\gamma}  \int\limits_{P(\gamma)} 
		\rho_\gamma(gh)^{-1/2}
		\varphi(g h) \chi_\gamma (h) \overline{(R_\gamma\varphi)(g)} \de \mu_\gamma (h) 
		\de \nu_\gamma({\dot g})\\
		 \mathop{=}\limits^{(\dagger)} & \int\limits_{K_\gamma} \int\limits_{P(\gamma)}
		\rho_\gamma(gh))^{-1/2} \varphi(g h)  \overline{(R_\gamma\varphi)(gh)} \de \mu_\gamma (h) 
		\de \nu_\gamma({\dot g})
		\\
		 \mathop{=}\limits^{(\ddagger)}& \int\limits_{G} \rho_\gamma(g)^{1/2} \varphi(g)  \overline{(R_\gamma\varphi)(g)}\de \mu_G(g), 
		\end{align*}
where we have used \eqref{invar} to get $(\dagger)$, and $(\ddagger)$ follows from \eqref{rho_eq0}.
The required continuity is then a consequence of the fact that the functions
$(\gamma, g) \mapsto \rho_\gamma(g)^{1/2}$ and $(\gamma, g) \mapsto (R_\gamma \varphi)(g)$ are continuous, while $\varphi\in \Cc_c(G)$.
\end{proof}

\begin{lemma}\label{lemma_cont-rep}
	For every $f\in C^*(G)$,
	 $\phi, \psi\in \Cc_c(G)$ the function 
	$$ \gamma \mapsto \scalar{\pi_\gamma(f)R_\gamma\varphi}{R_\gamma\psi}_\gamma$$
	is continuous on $\Gamma$.
\end{lemma}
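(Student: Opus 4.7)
The plan is to reduce to $f\in\Cc_c(G)$ by a density argument and then recast the inner product as a single integral over $G$ whose integrand is jointly continuous on $\Gamma\times G$ with compact support in~$g$ uniformly in~$\gamma$.

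For the density step, any $f_n\to f$ in $C^*(G)$ with $f_n\in\Cc_c(G)$ satisfies
$$\bigl|\scalar{\pi_\gamma(f-f_n)R_\gamma\varphi}{R_\gamma\psi}_\gamma\bigr|\le \norm{f-f_n}_{C^*(G)}\norm{R_\gamma\varphi}_\gamma\norm{R_\gamma\psi}_\gamma,$$
and by Proposition~\ref{e-contfield-H} the two norms on the right are continuous on~$\Gamma$, hence bounded on each compact subset. The convergence of $\scalar{\pi_\gamma(f_n)R_\gamma\varphi}{R_\gamma\psi}_\gamma$ to $\scalar{\pi_\gamma(f)R_\gamma\varphi}{R_\gamma\psi}_\gamma$ is therefore locally uniform, so it suffices to treat $f\in\Cc_c(G)$.

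For $f\in\Cc_c(G)$, a direct computation from the definition~\eqref{R1} of $R_\gamma$ and the identity $\rho_\gamma(gh)=\rho_\gamma(g)\rho_\gamma(h)$ for $g\in G$, $h\in P(\gamma)$ shows that $\pi_\gamma(g_0)R_\gamma\varphi=R_\gamma\varphi^{g_0}$ with $\varphi^{g_0}(g):=\varphi(g_0^{-1}g)$, and hence, after integrating against $f$ and applying Fubini, $\pi_\gamma(f)R_\gamma\varphi=R_\gamma(f*\varphi)$ where $f*\varphi\in\Cc_c(G)$ is the usual convolution. Polarizing the identity
$$\norm{R_\gamma\varphi}_\gamma^2 = \int_G \rho_\gamma(g)^{1/2}\varphi(g)\overline{(R_\gamma\varphi)(g)}\,\de\mu_G(g)$$
derived in the proof of Proposition~\ref{e-contfield-H}, one obtains
$$\scalar{\pi_\gamma(f)R_\gamma\varphi}{R_\gamma\psi}_\gamma = \int_G \rho_\gamma(g)^{1/2}(f*\varphi)(g)\overline{(R_\gamma\psi)(g)}\,\de\mu_G(g).$$

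The integrand is jointly continuous in $(\gamma,g)\in\Gamma\times G$: the map $(\gamma,g)\mapsto(R_\gamma\psi)(g)$ is continuous as noted just after~\eqref{R1}, and $(\gamma,g)\mapsto\rho_\gamma(g)=\Delta_G(g^{-1})\omega_{P(\gamma)}(g)$ is continuous because $\omega_{P(\gamma)}(g)=\int_{P(\gamma)}\varphi_0(gkg^{-1})\de\mu_\gamma(k)$ falls under Lemma~\ref{Wi07_LH.9}\eqref{Wi07_LH.9_item4}. Since $f*\varphi\in\Cc_c(G)$ has compact support in~$g$ independent of~$\gamma$, dominated convergence yields the continuity of the integral in~$\gamma$. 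The main obstacle is the verification of $\pi_\gamma(g_0)R_\gamma\varphi=R_\gamma\varphi^{g_0}$: it hinges on the standard Radon-Nikodym formula for the unitary action of $G$ on~$\Hc_\gamma$ combining precisely with the $\rho_\gamma^{-1/2}$ factor inside~\eqref{R1}; once that identity is in place, the remainder is a routine application of the earlier continuity results.
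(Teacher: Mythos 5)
Your proposal is correct and follows essentially the same route as the paper: reduce to $f\in\Cc_c(G)$ by the $C^*$-norm estimate together with the continuity of $\gamma\mapsto\norm{R_\gamma\varphi}_\gamma$, then express $\scalar{\pi_\gamma(f)R_\gamma\varphi}{R_\gamma\psi}_\gamma$ as a single integral over $G$ of a jointly continuous, compactly supported integrand involving $\rho_\gamma^{1/2}$ and $(R_\gamma\psi)(\cdot)$. Your packaging via the intertwining identity $\pi_\gamma(f)R_\gamma\varphi=R_\gamma(f*\varphi)$ and polarization of the norm formula from Proposition~\ref{e-contfield-H} is just a reorganization of the paper's direct computation using \eqref{invar} and \eqref{rho_eq0}, and leads to the identical formula $\int_G\rho_\gamma(g)^{1/2}(f*\varphi)(g)\overline{(R_\gamma\psi)(g)}\,\de\mu_G(g)$.
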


\begin{proof} 
Recall that the induced representation $\pi_\gamma=\Ind_{P(\gamma)}^G \chi_\gamma$ is the unitary representation of $G$ on  $\Hc_\gamma$ given by
	$$(\pi_\gamma (g) \phi)(s) = \big( \frac{\rho_\gamma(g^{-1}s)}{\rho_\gamma(s)}\big)^{1/2}\phi(g^{-1} s)$$ 
	for every $\phi\in \Dc(G,P(\gamma), \chi_\gamma)$.
	(See \cite[Thm.~C.33]{RW98}.)
	Then for every $\varphi\in \Cc_c(G)$, $s, t\in G$ we have
	\begin{align*}
	(\pi_\gamma(t) R_\gamma\varphi)(s)& = \big( \frac{\rho_\gamma(t^{-1}s)}{\rho_\gamma(s)}\big)^{1/2} (R_\gamma\varphi)(t^{-1}s)\\
	& = \big( \frac{\rho_\gamma(t^{-1}s)}{\rho_\gamma(s)}\big)^{1/2} \int\limits_{P(\gamma)}\rho_\gamma(t^{-1} s h)^{-1/2} \varphi(t^{-1} s h) \chi_\gamma(h) \de \mu_K(h)\\
	& =  \int\limits_{P(\gamma)}\rho_\gamma(s h)^{-1/2} \varphi(t^{-1} s h) \chi_\gamma(h) \de \mu_K(h), 
\end{align*}
where we have used that $\rho_\gamma(gh) =\rho_\gamma(g)\rho_\gamma(h)$ when $h \in P(\gamma)$ and $g\in G$. 

Consider first $f \in \Cc_c(G)$. 
For every $s\in G$ we  have 
	\allowdisplaybreaks
	\begin{align*}
		(\pi_\gamma(f) R_\gamma\varphi)(s) & = \int\limits_G f(t) (\pi_\gamma(t) R_\gamma\varphi)(s)  \de\mu_G(t) \\
&=
		\int\limits_{G} \int\limits_{P(\gamma)} 
		f(t) 
		\rho_\gamma(s h)^{-1/2}\varphi(t^{-1} sh) \chi_\gamma(h)\de \mu_\gamma(h) \de\mu_G(t).
	\end{align*}
	It follows that 
	\begin{align*}
	&	\scalar{\pi_\gamma(f)R_\gamma\varphi}{R_\gamma\psi}_\gamma\\
 & =\int\limits_{G/P(\gamma)}
	\int\limits_{G} \int\limits_{P(\gamma)} 
		f(t) 
		\rho_\gamma(s h)^{-1/2}\varphi(t^{-1} sh) \chi_\gamma(h)
	 \overline{(R_\gamma\psi)(s)} \de \mu_\gamma(h) \de\mu_G(t) \de \nu_\gamma({\dot s}).
	\end{align*}
	By \eqref{invar}, 
	$ \chi_\gamma(h)
	 \overline{(R_\gamma\psi)(s)}= 
	 \overline{(R_\gamma\psi)(sh)}
	 $.
	Since $f,\varphi,\psi$ have compact support  in $G$, and $R_\gamma\psi$ has compact support in $G$ modulo $P(\gamma)$ we can write
	\allowdisplaybreaks
	\begin{align*}
		& \scalar{\pi_\gamma(f)R_\gamma\varphi}{R_\gamma\psi}_\gamma\\
		& = \int\limits_{G}  \int\limits_{G/P(\gamma)}
	 \int\limits_{P(\gamma)} 
		f(t) 
		\rho_\gamma( s h)^{-1/2}\varphi(t^{-1} sh) 
	 \overline{(R_\gamma\psi)(sh)} \de \mu_\gamma(h)  \de \nu_\gamma({\dot s}) \de\mu_G(t)
	 \\
	 & 
	 = \int\limits_{G}  \int\limits_{G} f(t) 
		\rho_\gamma(s)^{1/2}  \varphi(t^{-1} s) 
	 \overline{(R_\gamma\psi)(s)}   \de\mu_G(s)  \de\mu_G(t)
	 \end{align*}
		Then, since $f$, $\varphi$ are continuous and have compact support and the functions
		$ (\gamma, s) \mapsto \overline{(R_\gamma\psi)(s)}$ and $(\gamma, s) \mapsto \rho_\gamma(s)$ 
		are continuous, we get the continuity in the statement for $f\in \Cc_c(G)$. 
		
		Assume now that $f\in C^*(G)$. 
		Then for every $\epsilon >0$ there is 
		$f_\epsilon \in \Cc_c(G)$ such that $\Vert f -f_\epsilon\Vert < \epsilon$. 
		Fix $\gamma_0\in \Gamma$. 
		Then,
		since the function
		$\gamma \to \Vert R_\gamma\phi\Vert_\gamma$ is continuous on $\Gamma$ for every $\phi \in \Cc_0(G)$, 
 we have
		$$ \vert \scalar{\pi_\gamma(f)R_\gamma\varphi}{R_\gamma\psi}_\gamma-
		\scalar{\pi_\gamma(f_\epsilon)R_\gamma\varphi}{R_\gamma\psi}_\gamma\vert 
		\le \epsilon \Vert {R_\gamma\varphi}\Vert_\gamma\Vert  {R_\gamma\psi}\Vert_\gamma \le C_\Omega \epsilon, $$
	 for $\gamma$ in a compact neighbourhood  
		$\Omega$ 
		of $\gamma_0$, 	where $C_\Omega$ is a constant that depends only on $\Omega$, $\varphi$ and $\psi$.
Therefore, the function $\gamma\mapsto  \scalar{\pi_\gamma(f)R_\gamma\varphi}{R_\gamma\psi}_\gamma$ can be approximated by continuous function on compact neighbourhoods of $\gamma_0$, hence it is continuous at $\gamma_0$. 
Since $\gamma_0$ is arbitrary we get the continuity in the statement. 
\end{proof}

\begin{theorem}\label{e-cont-field}
	In the Hypothesis~\ref{e-cont-hyp} above, if $\Sc\in \Subquot^{\Tr}(C^*(G))$ satisfies $\Gamma=\widehat{\Sc}$, 
	then there is a $*$-isomorphism 
	$$\Sc\mathop{\longrightarrow}\limits^{\sim} \Ac(\Hb_\Gamma), \quad 
	f\mapsto (\pi_\gamma(f))_{\gamma\in\Gamma}$$
	and the $C^*$-algebra $\Sc$ is Morita equivalent to a commutative $C^*$-algebra. 
	\end{theorem}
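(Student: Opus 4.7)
The plan is to apply Corollary~\ref{isom} to $\Sc$ equipped with the continuous field of Hilbert spaces $\Hb_\Gamma$ built from the continuity structure of Proposition~\ref{e-contfield-H}, and then to deduce Morita equivalence with a commutative $C^*$-algebra from the triviality of $\Hb_\Gamma$ recorded in Remark~\ref{e-contfield}. Since $\gamma=[\pi_\gamma]\in\Gamma=\widehat{\Sc}$, each representation $\pi_\gamma=\Ind_{P(\gamma)}^G\chi_\gamma$ of $G$ factors through $\Sc$, so $\pi_\gamma$ may be viewed as an irreducible $*$-representation of $\Sc$ with $[\pi_\gamma]=\gamma$. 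What remains is to check that $(\pi_\gamma(a))_{\gamma\in\Gamma}\in\Ac_0(\Hb_\Gamma)$ for every $a\in K_\Sc^+$, which splits into a continuity statement for $(\pi_\gamma(a))_\gamma$ as a section of the field $\Ac(\Hb_\Gamma)$ and a vanishing-at-infinity statement for its pointwise operator norm.

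For the continuity step, pick $a\in\Sc$ and lift it to some $f\in C^*(G)$ projecting to $a$, so that $\pi_\gamma(f)=\pi_\gamma(a)$ for all $\gamma\in\Gamma$. Lemma~\ref{lemma_cont-rep} then guarantees that
$$\gamma\mapsto\scalar{\pi_\gamma(a)R_\gamma\varphi}{R_\gamma\psi}_\gamma$$
is continuous for every $\varphi,\psi\in\Cc_c(G)$, while $\Vert\pi_\gamma(a)\Vert\le\Vert a\Vert$ uniformly in $\gamma$. Because $F=\{(R_\gamma\varphi)_\gamma\mid\varphi\in\Cc_c(G)\}$ is a continuity structure for $\Hb_\Gamma$ by Proposition~\ref{e-contfield-H} and $\Sc$ is liminal (so each $\pi_\gamma(a)$ is compact), the standard criterion for continuous sections of a field of compact operators, cf.\ \cite[10.3--10.4]{Di64}, upgrades this to continuity of $(\pi_\gamma(a))_\gamma$ in $\Ac(\Hb_\Gamma)$.

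For the vanishing at infinity, Hypothesis~\ref{e-cont-hyp}\eqref{cont-hyp_item1} combined with Proposition~\ref{PI}\eqref{PI_item3} and~\eqref{PI_item5} gives $K_\Sc^+\subseteq J_\Sc\cap\Sc^+\subseteq\mg_\Sc^+$. For $a\in K_\Sc^+$, the definition of $J_\Sc$ forces $\gamma\mapsto\Tr\pi_\gamma(a)$ to be continuous, finite, and to vanish outside some quasi-compact subset of $\Gamma$, and positivity of $\pi_\gamma(a)$ gives $\Vert\pi_\gamma(a)\Vert\le\Tr\pi_\gamma(a)$, so the norm function vanishes at infinity as well. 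Combined with the previous paragraph, this places $(\pi_\gamma(a))_\gamma$ in $\Ac_0(\Hb_\Gamma)$, so Corollary~\ref{isom} delivers the $*$-isomorphism $\Sc\cong\Ac_0(\Hb_\Gamma)$, $a\mapsto(\pi_\gamma(a))_\gamma$.

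Finally, by Remark~\ref{e-contfield} the continuous field $\Hb_\Gamma$ is trivial, hence $\Ac_0(\Hb_\Gamma)\cong\Cc_0(\Gamma,\Kc(\Hc))\cong\Cc_0(\Gamma)\otimes\Kc(\Hc)$ for a separable Hilbert space $\Hc$, and the latter is Morita equivalent to the commutative $C^*$-algebra $\Cc_0(\Gamma)$ through the standard imprimitivity bimodule $\Cc_0(\Gamma,\Hc)$, cf.\ \cite{RW98}. The step I expect to be the main obstacle is the continuity verification: one must promote pointwise continuity of matrix coefficients against the specific sections $R_\gamma\varphi$ to continuity of $(\pi_\gamma(a))_\gamma$ as a section of the field of compact operators, which genuinely uses both the totality of $F$ in each fibre $\Hc_\gamma$ (noted after~\eqref{R1}) and the liminal character of $\Sc$.
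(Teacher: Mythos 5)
Your overall architecture matches the paper's: reduce via Corollary~\ref{isom} to showing $(\pi_\gamma(a))_{\gamma\in\Gamma}\in\Ac_0(\Hb_\Gamma)$ for $a\in K_\Sc^+$, lift to $C^*(G)$ and use Lemma~\ref{lemma_cont-rep} for the matrix coefficients, use $K_\Sc=J_\Sc$ (Proposition~\ref{PI}) for the trace, and get Morita equivalence from the triviality of $\Hb_\Gamma$ in Remark~\ref{e-contfield}. But the continuity step, which you yourself flag as the main obstacle, contains a genuine gap: there is no ``standard criterion'' in \cite[10.3--10.4]{Di64} asserting that weak continuity of $\gamma\mapsto\scalar{\pi_\gamma(a)R_\gamma\varphi}{R_\gamma\psi}_\gamma$ against a continuity structure, together with a uniform norm bound and compactness of each $\pi_\gamma(a)$, forces $(\pi_\gamma(a))_\gamma$ to be a continuous section of $\Ac(\Hb_\Gamma)$. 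Continuity of a section of the field of elementary $C^*$-algebras means local norm-approximability by sections built from rank-one operators $\theta_{x,y}$ with $x,y$ continuous vector fields, and weak continuity does not imply this: e.g.\ over $\Gamma=[0,1]$ with constant fibre $\ell^2$, a field of rank-one projections onto $e_n$ at $\gamma=1/n$ (suitably interpolated, and equal to $0$ at $\gamma=0$) has continuous matrix coefficients against all continuous vector fields and norm $\le 1$, yet $\Vert T_\gamma\Vert$ jumps at $0$, so it is not a continuous section. Liminality of $\Sc$ gives compactness of the fibres but does nothing to rule out exactly this kind of escape to infinity in the fibre.

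What repairs the argument is precisely the ingredient you establish but then use only for decay at infinity: the continuity of $\gamma\mapsto\Tr\pi_\gamma(a)$ for $a\in K_\Sc^+=J_\Sc\cap\Sc^+$. For positive operators, weak convergence of matrix coefficients together with convergence of traces upgrades to trace-norm (hence operator-norm) convergence, and this is the content of the result the paper invokes, namely the Corollary on p.~260 of \cite{Fe61}: weak continuity against a total family of continuous vector fields plus continuity (and compact support) of the trace of a positive field yields membership in $\Ac_0(\Hb_\Gamma)$ in one stroke. So your proof becomes correct once the appeal to \cite[10.3--10.4]{Di64} is replaced by Fell's criterion applied with both the matrix-coefficient continuity and the trace continuity as hypotheses; as written, the continuity claim rests on a criterion that is false, even though all the facts needed for the valid argument are already on your page.
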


\begin{proof}
	By hypothesis, every compact subset of $\widehat{\Sc}$ has finite covering dimension,
	hence $J_\Sc=K_\Sc$ by Proposition~\ref{PI}\eqref{PI_item3}. 
	Therefore, by Corollary~\ref{isom}, it suffices to prove that for arbitrary $f\in K_\Sc^+$ we have $(\pi_\gamma(f))_{\gamma\in\Gamma}\in \Ac(\Hb_\Gamma)$.  
	
	Let $\Jc_1\subseteq\Jc_2$ be closed two-sided ideals of $C^*(G)$ with $\Sc=\Jc_2/\Jc_1$. 
	For every $ f\in J_{\Sc}=K_{\Sc}$, 
	there is $f_0\in K_{\Jc_2}$ such that $f = f_0 + \Jc_2$, by Proposition~\ref{PI}, since $\Gamma$
	has finite covering dimension. 
    Since $\pi_\gamma(J_2) =0$ for every $\gamma\in \Gamma$, 
it follows by Lemma~\ref{lemma_cont-rep} that, for every $\varphi, \psi\in \Cc_c(G)$,  the function
	$$ \gamma\mapsto \dual{\pi_\gamma(f)R_\gamma \varphi}{R_\gamma\psi}= \dual{\pi_\gamma(f_0)R_\gamma \varphi}{R_\gamma\psi}
	$$
	is continuous on $\Gamma$. 
	
	On the other hand, as $f\in J_{\Sc}$, the function 
	$$ \gamma \mapsto \Tr(\pi_\gamma(f))$$ 
	is continuous on $\Gamma$, and has compact support. 
	
	By \cite[Cor., page 260]{Fe61} 
	 we obtain 
	$(\pi_\gamma(f))_{\gamma\in\Gamma}\in \Ac(\Hb_\Gamma)$, 
	which completes the proof. 
\end{proof}

\subsection{The case of exponential Lie groups}
In this subsection 
we denote by $G$ an exponential Lie group with its Lie algebra~$\gg$.  

We first recall the definition of the Bernat-Kirillov correspondence.
\begin{definition}\label{polariz_def}
	\normalfont 
	For any $\xi\in\gg^*$ we define 
	$$\begin{aligned}
		\Sg(\xi):=
		&\{\pg\in\Gr_{\alg}(\gg)\mid [\pg,\pg]\subseteq\Ker\xi\}, \\
		\Pg(\xi):=
		&\{\pg\in\Sg(\xi)\mid 2\dim\pg=\dim\gg+\dim\gg(\xi)\}.\\
		\Ig(\xi):=
		&\{\pg\in\Sg(\xi)\mid [\Ind_P^G(\chi_\xi\vert_P)]\in\widehat{G}.\}
	\end{aligned}$$
	The elements of $\Pg(\xi)$ are called \emph{polarizations} at $\xi\in\gg^*$. 
	Here we used the notation 
	$$\chi_{\xi}\colon G\to\TT,\quad \chi_\xi(x):=\ee^{\ie\langle\xi,x\rangle}$$
	hence $\chi_\xi\vert_P\colon P\to\TT$ is a continuous morphism of Lie groups  for every $\pg\in\Sg(\xi)$ with its corresponding connected subgroup $P\subseteq G$. 
\end{definition}

\begin{remark}
	\label{BK_rem}
	\normalfont 
	In the setting of Definition~\ref{polariz_def}, 
	the following assertions hold: 
	\begin{enumerate}[{\rm(i)}]
		\item 
		For every $\xi\in\gg^*$ one has  
		$\Ig(\xi)=\{\pg\in\Pg(\xi)\mid P.\xi=\xi+\pg^\perp\}\ne\emptyset$ 
		and $\Ig(\xi)\subseteq\Pg(\xi)\subseteq\Sg(\xi)$.
		\item If $\xi\in\gg^*$, 
		then $[\Ind_{P_1}^G(\chi_\xi\vert_{P_1})]=
		[\Ind_{P_2}^G(\chi_\xi\vert_{P_2})]\in\widehat{G}$ 
		for all $\pg_1,\pg_2\in\Ig(\xi)$. 
		\item If $\xi_j\in\gg^*$ and $\pg_j\in\Ig(\xi_j)$ for $j=1,2$, 
		then $[\Ind_{P_1}^G(\chi_{\xi_1}\vert_{P_1})]=
		[\Ind_{P_2}^G(\chi_{\xi_2}\vert_{P_2})]$ 
		if and only if there exists $x\in G$ with $\xi_2=x.\xi_1$. 
	\end{enumerate}
	See \cite[Ch. VI, \S\S 2--3]{BCDLRRV72} for proofs of these assertions. 
	
	As proved in 
{\cite[Ch. VI, \S 2]{BCDLRRV72}},
	the Bernat-Kirillov correspondence 
	$$\kappa_G\colon \gg^*/G\to\widehat{G}, \; \; 
	\kappa_G(G.\xi)=[\Ind_P^G(\chi_\xi\vert_P)]\,\, \text
	{for all $\xi\in\gg^*$ and $\pg\in\Ig(\xi)$} $$
	is well defined and bijective. 
\end{remark}

For the exponential group $G$ above, 
we let  $q\colon \gg^* \to \gg^*/G$ be the quotient map defined by the coadjoint action of~$G$ and denote simply by  $\kappa:=\kappa_G \colon\gg^*/G\to\widehat{G}$
the Kirillov-Bernat bijection.

We assume that $G$ satisfies the following hypothesis. 
\begin{hypothesis}\label{contpolar-hyp}
\normalfont 
	\begin{enumerate}[{\rm(i)}]
		\item\label{contpolar-hyp_item1}   
		$\Gamma\in \LC^{\Tr}(\widehat{G})$ and  $(q\circ\kappa)^{-1}(\Gamma)\subseteq\gg^*\setminus[\gg,\gg]^\perp$. 
\item\label{contpolar-hyp_item2}
		$\sigma\colon \Gamma\to q^{-1}(\Gamma) $ 
		is a continuous mapping with  
		$q(\sigma(\ell)) =\ell$ for all $\ell \in q^{-1}(\Gamma)$. 
		\item\label{contpolar-hyp_item3}   
		$ \pg\colon \Gamma \to \Gr(\gg)$ is a continuous mapping such that 
		$\pg(\gamma)\in\Pg(\sigma(\gamma)) $ and the real polarization $\pg(\gamma)$ satisfies Puk\'anszky's condition $P(\gamma)\sigma(\gamma)=\sigma(\gamma)+\pg(\gamma)^\perp$ 
		for every $\gamma\in\Gamma$, where $P(\gamma)$ is the connected closed subgroup of $G$ that corresponds to $\pg(\gamma)$
\end{enumerate}  
\end{hypothesis}

\begin{lemma}\label{equiv}
If the exponential Lie group $G$ and the subset set $\Gamma\subseteq\widehat{G}$ satisfy  Hypothesis~\ref{contpolar-hyp} above, then 
they satisfy Hypothesis~\ref{e-cont-hyp} as well.
\end{lemma}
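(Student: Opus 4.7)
The strategy is to verify the four items of Hypothesis~\ref{e-cont-hyp} one by one, constructing the data $P(\gamma)$, $\chi_\gamma$, $\pi_\gamma$ directly from the section $\sigma$ and the polarization field $\pg$ provided by Hypothesis~\ref{contpolar-hyp}.

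For item~\eqref{cont-hyp_item1}, membership $\Gamma\in\LC^\Tr(\widehat G)$ is given. For the covering dimension, I would observe that $\sigma\colon\Gamma\to\gg^*$ is a continuous injection: if $\sigma(\gamma_1)=\sigma(\gamma_2)$, applying $\kappa\circ q$ recovers $\gamma_1=\gamma_2$. Since $\gg^*$ is Hausdorff and finite-dimensional, the restriction of $\sigma$ to any compact $K\subseteq\Gamma$ is a homeomorphism onto its image, so $\dim K\le\dim\gg$. Because $\Gamma$ is metrizable and locally compact by Remark~\ref{paracomp}, it is $\sigma$-compact, and the countable sum theorem for covering dimension then yields $\dim\Gamma\le\dim\gg$. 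For items~\eqref{cont-hyp_item3} and \eqref{cont-hyp_item2}, I would define $P(\gamma)$ to be the connected closed subgroup of $G$ with Lie algebra $\pg(\gamma)$ and set $\chi_\gamma:=\chi_{\sigma(\gamma)}$ in the sense of Definition~\ref{polariz_def}. Continuity of $\gamma\mapsto P(\gamma)\in\Sigma(G)$ follows by composing $\pg\colon\Gamma\to\Gr_{\alg}(\gg)$ with the homeomorphism $\Gr_{\alg}(\gg)\to\Sigma_0(G)$ of Remark~\ref{eqtop}. The inclusion $\Pg(\sigma(\gamma))\subseteq\Sg(\sigma(\gamma))$ ensures $\chi_\gamma|_{P(\gamma)}$ is a character, and joint continuity of $(\gamma,g)\mapsto\chi_\gamma(g)=\ee^{\ie\langle\sigma(\gamma),\exp_G^{-1}(g)\rangle}$ follows from continuity of $\sigma$ and of $\exp_G^{-1}$ (which exists globally since $G$ is exponential).

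For item~\eqref{cont-hyp_item4}, Pukánszky's condition in Hypothesis~\ref{contpolar-hyp}\eqref{contpolar-hyp_item3} means $\pg(\gamma)\in\Ig(\sigma(\gamma))$, so by the Bernat-Kirillov correspondence (Remark~\ref{BK_rem}), the induced representation $\pi_\gamma=\Ind_{P(\gamma)}^G\chi_\gamma$ is irreducible with $[\pi_\gamma]=\kappa(G\cdot\sigma(\gamma))=\gamma$. To see $\pi_\gamma$ is infinite-dimensional, I would use that every finite-dimensional irreducible unitary representation of the connected solvable Lie group $G$ is a character (by the Lie-theoretic version of Lie's theorem applied to exponential groups), and such characters correspond under Bernat-Kirillov exactly to the singleton coadjoint orbits, i.e., to elements of $[\gg,\gg]^\perp$. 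Since Hypothesis~\ref{contpolar-hyp}\eqref{contpolar-hyp_item1} forces $\sigma(\gamma)\notin[\gg,\gg]^\perp$, $\pi_\gamma$ must be infinite-dimensional.

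I expect the only nontrivial step to be the global covering-dimension bound in item~\eqref{cont-hyp_item1}: the homeomorphism argument immediately gives $\dim K\le\dim\gg$ for every compact $K\subseteq\Gamma$, but upgrading to a uniform bound on all of $\Gamma$ requires both the $\sigma$-compactness coming from Remark~\ref{paracomp} and the countable sum theorem of classical dimension theory, neither of which is entirely formal from the hypotheses. All remaining verifications are essentially bookkeeping, with the Bernat-Kirillov correspondence doing the representation-theoretic work in item~\eqref{cont-hyp_item4}.
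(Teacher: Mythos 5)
Your proposal is correct and follows the same overall strategy as the paper: the same constructions $P(\gamma)=\exp_G(\pg(\gamma))$, $\chi_\gamma=\chi_{\sigma(\gamma)}$, continuity of $\gamma\mapsto P(\gamma)$ via Remark~\ref{eqtop}, and irreducibility of $\pi_\gamma$ from Puk\'anszky's condition. Where you genuinely diverge is the dimension-theoretic part of Hypothesis~\ref{e-cont-hyp}\eqref{cont-hyp_item1}. The paper transfers the problem into $\gg^*$: it asserts that $\sigma$ identifies $\Gamma$ with a subspace of $q^{-1}(\Gamma)$, proves that $q^{-1}(\Gamma)$ is locally closed in $\gg^*$, and then uses Pears's results on closed and $F_\sigma$ subsets of normal spaces to bound the covering dimension (incidentally, the paper's phrase that $\sigma$ is a homeomorphism onto $q^{-1}(\Gamma)$ with inverse $q\vert_{q^{-1}(\Gamma)}$ is only accurate for the image $\sigma(\Gamma)$, since orbits are not singletons). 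You instead bound $\dim K\le\dim\gg$ for every compact $K\subseteq\Gamma$, because $\sigma\vert_K$ is an embedding into $\gg^*$, and then pass to all of $\Gamma$ by $\sigma$-compactness and the countable closed sum theorem; this yields the compact-subset requirement for free and avoids the local-closedness and $F_\sigma$ bookkeeping. One small slip: $\sigma$-compactness does not follow from metrizability plus local compactness alone (an uncountable discrete space is a counterexample); you should invoke second countability, which Remark~\ref{paracomp} does supply, together with local compactness. Finally, your explicit treatment of $[\pi_\gamma]=\gamma$ via Remark~\ref{BK_rem} and of infinite-dimensionality via Lie's theorem (characters correspond to orbits inside $[\gg,\gg]^\perp$, excluded by Hypothesis~\ref{contpolar-hyp}\eqref{contpolar-hyp_item1}) covers points the paper leaves implicit, since its proof only cites \cite{BCDLRRV72} for irreducibility; an even shorter argument is that $\sigma(\gamma)\notin[\gg,\gg]^\perp$ forces $\gg(\sigma(\gamma))\subsetneq\gg$, hence $\pg(\gamma)\subsetneq\gg$ and $G/P(\gamma)$ has positive dimension, so the representation space $\Hc_\gamma$ is infinite-dimensional.
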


\begin{proof}
In Hypothesis~\ref{contpolar-hyp}, the topological space $\Gamma$ is paracompact and its covering dimension is finite. 
Moreover, the covering dimension of every compact subset of $\Gamma$ is finite. 

In fact, $\Gamma$ is the spectrum of a continuous-trace separable $C^*$-algebra, 
and paracompact, as above. 
Moreover, the mapping $\sigma\colon\Gamma\to q^{-1}(\Gamma)$ is a homeomorphism (with its inverse $q\vert_{q^{-1}(\Gamma)}$), 
hence it suffices to show that $q^{-1}(\Gamma)$, regarded as a topological subspace of $\gg^*$, has finite covering dimension. 
We have seen above that $\Gamma$ is a locally compact Hausdorff space,  
hence $q^{-1}(\Gamma)$ is in turn a locally compact Hausdorff space, 
and then it is a locally closed subset of $\gg^*$ by \cite[Lemma 1.26]{Wi07}. 
That is,  
$$q^{-1}(\Gamma)=E\cap D$$
for a suitable closed subset $E\subseteq\gg^*$ 
and a suitable open subset $D\subseteq \gg^*$. 
The covering dimension of the closed subset $E$ is finite since it 
is less than the covering dimension of $\gg^*$, cf. \cite[Ch. 3, Prop. 1.5 and Th. 2.7]{Pea75}. 
Moreover, since we have seen above that $\Gamma$ is a metrizable space, it follows that 
both $q^{-1}(\Gamma)$ and its subspace $E$ are metrizable. 
Then $D\cap E$ is an open subset of the metrizable space $E$, hence $D\cap E$ is an $F_\sigma$-subset of $E$. 
The space $E$ is metrizable, hence is a normal space, and then the covering dimension of its $F_\sigma$-subset $D\cap E$ is less than the covering dimension of $E$ by \cite[Ch. 3, Cor. 6.3]{Pea75}. 
Consequently, the covering dimension of $E\cap D$ is finite. 
That is, $q^{-1}(\Gamma)$ has finite covering dimension. 

Moreover, since $\Gamma$ is a Hausdorff space, every compact subset of $\Gamma$ is closed, 
hence its covering dimension is less than the covering dimension of $\Gamma$ by \cite[Ch. 3, Prop. 1.5]{Pea75} again. 

By Remark~\ref{eqtop} and Hypothesis~\ref{contpolar-hyp} we have that
the mapping 
$$ \Gamma \to \Sigma(G), \quad \gamma\mapsto P(\gamma)$$
is continuous where $\Sigma(G)$ is endowed with the Chabauty-Fell topology.

Let 
$$\chi_\gamma\colon G\to\TT, 
\quad \chi_\gamma(g) = \ee^{\ie\dual{\sigma(\gamma)}{\log_G g}},$$ 
and denote by 
$\chi_\gamma$ also its restriction to $P(\gamma)$, that is, the character of $P(\gamma)$. 
so that $\chi_\gamma\vert_{P(\gamma)}\in\Hom(P(\gamma),\TT)$. 
Then $\chi_\gamma$ satisfies \eqref{cont-hyp_item2} in Hypothesis~\ref{e-cont-hyp}.

Finally, the representation $\pi_\gamma:=\Ind_{P(\gamma)}^G(\chi_\gamma\vert_{P(\gamma)})\colon G\to\Bc(\Hc_\gamma)$
is irreducible since the real polarization $\pg(\gamma)$ satisfies Puk\'anszky's condition, 
cf. \cite[Ch. VI, Prop. 3.2]{BCDLRRV72}.
\end{proof}

\begin{corollary}\label{cont-field}
	Assume the exponential Lie group $G$ and the subset $\Gamma\subseteq\widehat{G}$ satisfy
	the Hypothesis~\ref{contpolar-hyp} above. 
	If $\Sc\in \Subquot^{\Tr}(C^*(G))$ satisfies $\Gamma=\widehat{\Sc}$, 
	then we have the $*$-isomorphism 
	$$\Sc\mathop{\longrightarrow}\limits^{\sim} \Ac(\Hb_\Gamma), \quad 
	f\mapsto (\pi_\gamma(f))_{\gamma\in\Gamma}$$
	and the $C^*$-algebra $\Sc$ is Morita equivalent to a commutative $C^*$-algebra. 
	\end{corollary}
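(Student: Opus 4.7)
The plan is to deduce the corollary directly from Theorem~\ref{e-cont-field} by means of Lemma~\ref{equiv}, which serves as the bridge between the geometric hypotheses specific to exponential Lie groups (Hypothesis~\ref{contpolar-hyp}) and the abstract hypotheses on continuous selections of closed subgroups and characters (Hypothesis~\ref{e-cont-hyp}).

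In more detail, I would first invoke Lemma~\ref{equiv} to translate the data furnished by Hypothesis~\ref{contpolar-hyp}, namely the continuous section $\sigma\colon\Gamma\to q^{-1}(\Gamma)$ and the continuous polarization field $\pg\colon\Gamma\to\Gr(\gg)$ satisfying Puk\'anszky's condition, into the setup of Hypothesis~\ref{e-cont-hyp}. The closed subgroup $P(\gamma):=\exp_G(\pg(\gamma))\in\Sigma(G)$ depends continuously on $\gamma$ by the homeomorphism between $\Gr_{\alg}(\gg)$ and $\Sigma_0(G)$ recalled in Remark~\ref{eqtop}; the characters $\chi_\gamma(g):=\ee^{\ie\langle\sigma(\gamma),\log_G g\rangle}$ depend continuously on $(\gamma,g)$ because both $\sigma$ and $\log_G$ are continuous; and the induced representation $\pi_\gamma:=\Ind_{P(\gamma)}^G(\chi_\gamma\vert_{P(\gamma)})$ is irreducible by the Puk\'anszky condition together with \cite[Ch.~VI, Prop.~3.2]{BCDLRRV72} and infinite-dimensional because the assumption $(q\circ\kappa)^{-1}(\Gamma)\subseteq\gg^*\setminus[\gg,\gg]^\perp$ excludes one-point coadjoint orbits, which are precisely the orbits corresponding to one-dimensional representations under the Kirillov--Bernat bijection.

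Once Hypothesis~\ref{e-cont-hyp} has been verified, Theorem~\ref{e-cont-field} applies verbatim and yields both assertions of the corollary: the $*$-isomorphism $\Sc\to\Ac(\Hb_\Gamma)$ sending $f\mapsto(\pi_\gamma(f))_{\gamma\in\Gamma}$, and the Morita equivalence of $\Sc$ with a commutative $C^*$-algebra. The latter conclusion rests on Remark~\ref{e-contfield}: since $\Gamma$ is paracompact of finite covering dimension, $\Hb_\Gamma$ is a trivial continuous field of Hilbert spaces, so that $\Ac(\Hb_\Gamma)\simeq\Cc_0(\Gamma,\Kc(\Hc))$ for a single separable Hilbert space $\Hc$, which in turn is Morita equivalent to $\Cc_0(\Gamma)$.

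The only genuinely nontrivial issue is checking that $\Gamma$ itself, as well as each of its compact subsets, has finite covering dimension; but this is absorbed into Lemma~\ref{equiv}, where one argues that the section $\sigma$ identifies $\Gamma$ homeomorphically with the locally closed subset $q^{-1}(\Gamma)\subseteq\gg^*$, and then uses the fact that closed subsets of the finite-dimensional space $\gg^*$ and $F_\sigma$-subsets of metrizable spaces inherit finite covering dimension from the ambient space. With that result in hand, the proof of the corollary itself reduces to the formal chain of implications described above.
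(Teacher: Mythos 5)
Your proposal is correct and follows exactly the paper's route: the paper proves Corollary~\ref{cont-field} as a direct consequence of Lemma~\ref{equiv} (which converts Hypothesis~\ref{contpolar-hyp} into Hypothesis~\ref{e-cont-hyp}) and Theorem~\ref{e-cont-field}, which is precisely the chain of implications you describe. Your additional elaborations (continuity of $\gamma\mapsto P(\gamma)$ via Remark~\ref{eqtop}, irreducibility via Puk\'anszky's condition, infinite-dimensionality from avoiding $[\gg,\gg]^\perp$, and the covering-dimension argument) are all the content of Lemma~\ref{equiv} itself, so nothing is missing.
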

\begin{proof}
This is a direct consequence of Lemma~\ref{equiv} and Theorem~\ref{e-cont-field}.
\end{proof}

\section{Main results}\label{section4}

In order to state our main results 
(Theorem~\ref{finetriv} and Corollary~\ref{solv})
we need first to recall the setting of \cite[Sect. 6]{BB24}.

We start with a simple lemma.
\begin{lemma}
	\label{complex1}
	Let $\Vc$ be an $\RR$-linear space with its complexification $\Vc_\CC$ and define the antilinear map $C\colon\Vc_\CC\to\Vc_\CC$, $C(x+\ie y):=x-\ie y$. 
	Then the following assertions hold.
	\begin{enumerate}[{\rm(i)}]
		\item\label{complex1_item1} 
		For every $\CC$-linear subspace $\Wc\subseteq\Vc$ satisfying $C(\Wc)=\Wc$, 
		if we denote $\Xc:=\Wc\cap\Vc$ then $\Wc=\Xc_\CC=\Xc+\ie\Xc$. 
		\item\label{complex1_item2} 
		If $\Xc,\Yc\subseteq \Vc$ are $\RR$-linear subspaces, then 
		\begin{align}
			\label{complex1_eq1}
			& \Xc\subsetneqq\Yc\iff \Xc_\CC\subsetneqq\Yc_\CC; \\
			\label{complex1_eq2}
			& (\Xc\cap\Yc)_\CC=\Xc_\CC \cap\Yc_\CC;  \\
			& (\Xc+\Yc)_\CC=\Xc_\CC + \Yc_\CC. \nonumber
		\end{align}
	\end{enumerate}
\end{lemma}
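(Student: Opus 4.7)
The plan is to establish assertion~\eqref{complex1_item1} first and then reduce all three equalities in~\eqref{complex1_item2} to~\eqref{complex1_item1} together with the single elementary identity $\Xc_\CC\cap\Vc=\Xc$.  (I read $\Wc\subseteq\Vc_\CC$ in~\eqref{complex1_item1}, since a nonzero $\CC$-linear subspace contained in the purely real space $\Vc$ is impossible.)  For~\eqref{complex1_item1}, given $w\in\Wc$, I would take its $C$-symmetric and $C$-antisymmetric parts via the decomposition
\[
w=\tfrac{1}{2}(w+C(w))+\ie\cdot\tfrac{1}{2\ie}(w-C(w))
\]
and observe that both $\tfrac{1}{2}(w+C(w))$ and $\tfrac{1}{2\ie}(w-C(w))$ are $C$-fixed, hence lie in $\Vc$, and simultaneously belong to $\Wc$ because $\Wc$ is $\CC$-linear and $C$-stable.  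Thus both lie in $\Xc=\Wc\cap\Vc$, giving $w\in\Xc+\ie\Xc=\Xc_\CC$.  The reverse inclusion $\Xc_\CC\subseteq\Wc$ is immediate from $\Xc\subseteq\Wc$ and the $\CC$-linearity of $\Wc$.

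For~\eqref{complex1_item2}, the sum identity $(\Xc+\Yc)_\CC=\Xc_\CC+\Yc_\CC$ is a matter of regrouping real and imaginary parts on either side.  For the intersection identity, the inclusion $(\Xc\cap\Yc)_\CC\subseteq\Xc_\CC\cap\Yc_\CC$ is immediate.  For the reverse, note that $\Xc_\CC$ is $C$-stable (since $C(x+\ie x')=x-\ie x'\in\Xc+\ie\Xc=\Xc_\CC$ whenever $x,x'\in\Xc$), and likewise $\Yc_\CC$, so $\Xc_\CC\cap\Yc_\CC$ is a $C$-stable $\CC$-linear subspace of $\Vc_\CC$.  Applying \eqref{complex1_item1} gives
\[
\Xc_\CC\cap\Yc_\CC=\bigl((\Xc_\CC\cap\Yc_\CC)\cap\Vc\bigr)_\CC,
\]
and then the identity $\Xc_\CC\cap\Vc=\Xc$, which follows from uniqueness of the decomposition $v=x+\ie x'$ when $v$ is real, collapses the inner intersection to $\Xc\cap\Yc$.

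For the strict-inclusion equivalence~\eqref{complex1_eq1}, the implication $\Xc\subsetneqq\Yc\Rightarrow\Xc_\CC\subsetneqq\Yc_\CC$ follows from $\Xc_\CC\subseteq\Yc_\CC$ combined with the observation that $\Xc_\CC=\Yc_\CC$ would force $\Xc=\Xc_\CC\cap\Vc=\Yc_\CC\cap\Vc=\Yc$; the reverse implication is similar, with the same identity $\Xc_\CC\cap\Vc=\Xc$ used to recover strictness at the level of real subspaces from strictness of the complexifications.  None of this is deep: the sole conceptual point is recognising that the intersection formula reduces to~\eqref{complex1_item1} via $C$-stability of each factor, and after that everything is bookkeeping on the canonical decomposition $\Vc_\CC=\Vc\oplus\ie\Vc$.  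I therefore expect no real obstacle; the "hardest" step is merely verifying the $C$-stability needed to apply~\eqref{complex1_item1} in the intersection case.
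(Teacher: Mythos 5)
Your proof is correct and is exactly the standard argument the paper has in mind: the published proof of this lemma is just the word ``Straightforward,'' so your decomposition $w=\tfrac12(w+C(w))+\ie\,\tfrac{1}{2\ie}(w-C(w))$ for (i), the identity $\Xc_\CC\cap\Vc=\Xc$, and the reduction of the intersection formula to (i) via $C$-stability supply precisely the omitted details. Your reading of the hypothesis as $\Wc\subseteq\Vc_\CC$ (rather than the paper's typographical $\Wc\subseteq\Vc$) is also the intended one.
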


\begin{proof}
	Straightforward. 
\end{proof}

\begin{definition}
	\label{complex2}
	\normalfont 
	Let $\KK\in\{\RR,\CC\}$. 
	If $\Vc$ is a $\KK$-linear space  
	and $B\colon \Vc\times\Vc\to\KK$ is a skew-symmetric $\KK$-bilinear functional, we denote $$N(B):=\Vc^{\perp_B}:=\{v_0\in\Vc\mid (\forall v\in\Vc)\quad B(v_0,v)=\{0\}\}.$$
	We now assume $m:=\dim_\KK\Vc<\infty$ and we fix a sequence of linear subspaces  
	$$\{0\}=\Vc_0\subsetneqq\Vc_1\subsetneqq\cdots\subsetneqq\Vc_m=\Vc$$
	with $\dim_\KK\Vc_j=j$ for $j=0,\dots,m$,  
	set $B_j:=B\vert_{\Vc_j\times\Vc_j}$ for $j=0,\dots,m$, 
	and define 
	\begin{equation}\label{pgB}
		\pg(B):=N(B_1)+\cdots+N(B_m)\in\Gr(\Vc).
	\end{equation}
	For every $\KK$-linear subspace $\Wc\subseteq \Vc$ we define 
	$$\jump\Wc:=
	\{j\in\{1,\dots,m\}\mid \Vc_j\not\subset\Vc_{j-1}+\Wc\}.$$
	We now set $\pg^0(B):=\Vc$. 
	Inductively, assume $k\ge 0$ is an integer and we have already defined the linear subspaces $\pg^0(B)\supsetneqq\cdots\supsetneqq\pg^k(B)$ of~$\Vc$. 
	If the condition $\pg^k(B)\not\perp_B\pg^k(B)$ is satisfied, then we define 
	\begin{align}
		\label{D1_eq1}
		i_{k+1}(B):=
		& \min\{i\in\{0,\dots,m\}\mid \Vc_i\cap\pg^k(B)\not\perp_B\pg^k(B)\},\\ 
		\label{D1_eq2}
		\pg^{k+1}(B):=
		&(\Vc_{i_{k+1}(B)}\cap\pg^k(B))^{\perp_B}\cap\pg^k(B)
	\end{align}
	Moreover, we define 
	\begin{equation}
		\label{D1_eq3}
		j_{k+1}(B):=\min\{j\in\{0,\dots,m\}\mid \Vc_j\cap\pg^k(B)\not\subset\pg^{k+1}(B)\}.
	\end{equation}
	Denoting $d:=\dim_\KK(\Vc/N(B))$ we have $d=2\dim_\KK(\Vc/\pg(B))$ and  
	$$\Vc=\pg^0(B)\supsetneqq\cdots\supsetneqq\pg^{d/2}(B)=\pg(B),$$
	cf., \cite[Lemmas 5.3 and 5.5]{BB24}.
\end{definition}

\begin{remark}\label{remark4.3}
	\normalfont 
	In Definition~\ref{complex2} we have 
	$$i_k(B),j_k(B)\in\jump N(B), \quad i_k(B)<j_k(B), \quad i_k(B)<i_{k+1}(B)$$
	cf. \cite[Lemmas 5.6]{BB24}.
	Moreover, the mapping 
	$$\{1,\dots,d\}\to\jump N(B)\setminus\jump \pg(B), \quad k\mapsto i_k(B) $$
	is a well-defined increasing bijection by \cite[Lemmas 5.7]{BB24}, 
	and 
	the mapping 
	$${\mathbf j}(B)\colon \{1,\dots,d\}\to\jump \pg(B), \quad k\mapsto j_k(B)$$ 
	is a well-defined bijection, cf., \cite[Lemmas 5.8]{BB24}.
\end{remark}

\begin{lemma}
	\label{complex3}
	Let $\Vc$ be an $\RR$-linear space with a family of $\RR$-linear subspaces 
	$$\{0\}=\Vc_0\subsetneqq \Vc_1\subsetneqq\cdots\subsetneqq\Vc_m=\Vc$$
	with $\dim\Vc_j=j$ for $j=1,\dots,m$, 
	with their corresponding complexified spaces 
	$$\{0\}=\Vc_{0,\CC}\subsetneqq \Vc_{1,\CC}\subsetneqq\cdots\subsetneqq\Vc_{m,\CC}=\Vc_\CC$$
	If $B\colon \Vc\times\Vc\to\RR$ is a skew-symmetric $\RR$-bilinear functional, 
	we define the skew-symmetric $\RR$-bilinear functionals $B_j:=B\vert_{\Vc_j\times\Vc_j}\colon \Vc_j\times\Vc_j\to\RR$ for $j=0,\dots,m$, 
	with their corresponding 
	skew-symmetric 
	$\CC$-bilinear extensions $B_\CC\colon\Vc_\CC\times\Vc_\CC\to\CC$ 
	and 
	$B_{j,\CC}:=B_\CC\vert_{\Vc_{j,\CC}\times\Vc_{j,\CC}}\colon \Vc_{j,\CC}\times\Vc_{j,\CC}\to\CC$ for $j=0,\dots,m$. 	
	Then the following assertions hold: 
	\begin{enumerate}[{\rm(i)}]
		\item	\label{complex3_item1} 
		We have $N(B_{j,\CC})=N(B_j)_\CC$ for $j=1,\dots,m$. 
		\item\label{complex3_item2} 
		We have $\pg^k(B_\CC)=\pg^k(B)_\CC$, $i_k(B_\CC)=i_k(B)$, and $j_k(B_\CC)=j_k(B)$ for $k=1,\dots,d/2$. 
		\item\label{complex3_item3} 
		For every $\RR$-linear subspace $\Wc\subseteq\Vc$ we have $\jump\Wc=\jump\Wc_\CC$. 
	\end{enumerate}
\end{lemma}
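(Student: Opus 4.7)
The plan is to view the three assertions as manifestations of the same general principle: every construction in Definition~\ref{complex2} commutes with complexification when the underlying bilinear form is the $\CC$-bilinear extension of a real one. All bookkeeping reduces to Lemma~\ref{complex1}. Every subspace of $\Vc_\CC$ that needs to be analysed (namely $\Vc_{j,\CC}$, $\Wc_\CC$, $N(B_{j,\CC})$ and, inductively, $\pg^k(B_\CC)$) is invariant under the conjugation $C$, so by \eqref{complex1_item1} it equals the complexification of its intersection with~$\Vc$. The single observation that drives everything is that, for $\RR$-linear subspaces $\Xc,\Yc\subseteq\Vc$, one has $\Xc_\CC\perp_{B_\CC}\Yc_\CC$ if and only if $\Xc\perp_B\Yc$, which is immediate from the $\CC$-bilinearity of $B_\CC$ and the fact that $B_\CC$ restricts to $B$ on real vectors.

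With this principle in hand, \eqref{complex3_item1} is essentially direct: the inclusion $N(B_j)_\CC\subseteq N(B_{j,\CC})$ is a single computation, while the reverse inclusion uses that $N(B_{j,\CC})$ is $C$-invariant (because $B$ is real) to write $N(B_{j,\CC})=\Xc_\CC$ with $\Xc:=N(B_{j,\CC})\cap\Vc_j$, after which a one-line argument gives $\Xc\subseteq N(B_j)$. Assertion \eqref{complex3_item3} is just as short: the sum identity of Lemma~\ref{complex1}\eqref{complex1_item2} combined with \eqref{complex1_eq1} converts the condition $\Vc_j\not\subset\Vc_{j-1}+\Wc$ into $\Vc_{j,\CC}\not\subset\Vc_{j-1,\CC}+\Wc_\CC$, which is exactly the jump condition for $\Wc_\CC$ in the flag $(\Vc_{j,\CC})$.

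For \eqref{complex3_item2} I would induct on $k\ge 0$, the base case $\pg^0(B_\CC)=\Vc_\CC=\pg^0(B)_\CC$ being tautological. Assuming $\pg^k(B_\CC)=\pg^k(B)_\CC$, the identity \eqref{complex1_eq2} gives $\Vc_{i,\CC}\cap\pg^k(B_\CC)=(\Vc_i\cap\pg^k(B))_\CC$, and the perpendicularity principle above immediately yields $i_{k+1}(B_\CC)=i_{k+1}(B)$ via the defining formula \eqref{D1_eq1}. Plugging this back into \eqref{D1_eq2} produces $\pg^{k+1}(B_\CC)=\pg^{k+1}(B)_\CC$, provided one knows that the $B_\CC$-orthogonal of the $C$-invariant subspace $\Vc_{i_{k+1}(B),\CC}\cap\pg^k(B_\CC)$ taken inside $\pg^k(B_\CC)$ is the complexification of the corresponding real $B$-orthogonal. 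Finally, $j_{k+1}(B_\CC)=j_{k+1}(B)$ follows from \eqref{D1_eq3} by the same strict-inclusion argument used for \eqref{complex3_item3}, once $\pg^{k+1}$ has been identified.

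The only spot needing genuine care, and thus the main obstacle, is this orthogonal-complement identity: for a $C$-invariant subspace $\Yc=\Yc_0+\ie\Yc_0\subseteq\pg^k(B_\CC)$ arising from $\Yc_0\subseteq\pg^k(B)$, one must verify that $\Yc^{\perp_{B_\CC}}\cap\pg^k(B_\CC)$ equals the complexification of $\Yc_0^{\perp_B}\cap\pg^k(B)$. I would deduce this by first noting that $\Yc^{\perp_{B_\CC}}\cap\pg^k(B_\CC)$ is $C$-invariant (since $B$ is real and, by the inductive hypothesis, $\pg^k(B_\CC)$ is $C$-invariant), then invoking Lemma~\ref{complex1}\eqref{complex1_item1} to write it as the complexification of its real part, and finally identifying that real part via the perpendicularity principle. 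This step is mechanical but is the one place where the $C$-invariance has to be tracked explicitly; everything else is routine.
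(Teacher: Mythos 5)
Your proposal is correct, and it is precisely the detailed working-out of the paper's own (one-line) argument: the paper proves this lemma by declaring it straightforward from Lemma~\ref{complex1}, which is exactly the route you take — conjugation-invariance plus the compatibility of complexification with intersections, sums, strict inclusions, and $B$-orthogonality, applied inductively to the construction of Definition~\ref{complex2}. No gaps; the orthogonal-complement step you flag is handled correctly via $C$-invariance and Lemma~\ref{complex1}\eqref{complex1_item1}.
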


\begin{proof}
	Straightforward, using Lemma~\ref{complex1}. 
\end{proof}

Let $G$ be a completely solvable Lie group with its Lie algebra $\gg$ and 
fix a Jordan-H\"older sequence, that is, a chain of ideals 
\begin{equation}
	\label{JH}
	\{0\}=\gg_0\subseteq\gg_1\subseteq\cdots\subseteq\gg_m=\gg
\end{equation}
with $\dim\gg_j=j$ for $j=0,1,\dots,m$. 
We denote by $\langle\cdot,\cdot\rangle\colon \gg^*\times\gg\to\RR$ and we define 
$$(\forall\xi\in\gg^*)\quad B_\xi\colon\gg\times\gg\to\RR,\quad B_\xi(x,y):=\langle\xi,[x,y]\rangle.$$

\begin{definition}
\normalfont 
We introduce  the sets 
\begin{equation}
	\label{Lcont_eq1}
	J_m:=\{\mathbf{k}=(k_1,\dots,k_m)\in\NN^m\mid 0\le k_j\le j\text{ for } j=0,\dots,m\}
\end{equation}	
and 
\begin{equation}
	\label{Xiks}
	\Xi_{\mathbf{k}}:=\{\xi\in\gg^*\mid \dim\gg_j(\xi\vert_{\gg_j})=k_j\text{ for }j=1,\dots,m\}
\end{equation}
for $\mathbf{k}=(k_1,\dots,k_m)\in J_m$, 
where $\gg_j(\xi\vert_{\gg_j})=\{x\in\gg_j\mid(\forall y\in\gg_j)\ \langle\xi,[x,y]=0\}$
for $j=1,\dots,m$. 
It is clear that we have the partition 
\begin{equation}
\label{fine}
\gg^*=\bigsqcup_{\mathbf{k}\in J_m}\Xi_{\mathbf{k}}.
\end{equation}
where some of the sets in the right-hand side may be empty. 
This 
partition is called the \emph{fine layering} of~$\gg^*$ and 
the $G$-invariant sets $\Xi_{\mathbf{k}}$ are called the \emph{fine layers of $\gg^*$} with respect to the Jordan-H\"older sequence~\eqref{JH}. 
\end{definition}

For arbitrary $\xi\in\gg^*$, if we set
$$\pg_\alg(\xi):=\pg(B_\xi)$$
we have $\gg(\xi)\subseteq \pg_\alg(\xi)$, therefore $\jump \pg_\alg(\xi)\subseteq\jump \gg(\xi)$. 
If we denote $d:=\frac{1}{2}\dim(\gg/\gg(\xi))=\frac{1}{2}\dim (G\xi)\in\NN$, then, by Remark~\ref{remark4.3}, the elements of $\jump \pg_\alg(\xi)$ can be labeled via the bijective map 
$$\mathbf{j}(B) \colon \{1,\dots,d\}\to\jump \pg_\alg(\xi),\quad k\mapsto j_k(B).$$

The multi-index $\mathbf{k}=(k_1,\dots,k_m)\in J_m$ with $\dim\gg_j(\xi\vert_{\gg_r})=k_r$ for $r=1,\dots,m$ is uniquely determined by the pair~$(e,\mathbf{j})$, where $e=\jump\gg(\xi)\subseteq \{1,\dots,m\}$ with $\card e=2d$ and $\mathbf{j}=\mathbf{j}(B) \colon\{1,\dots,d\}\to e$ is the injective map above,
cf. \cite[Lemma 5.9]{BB24}.

\begin{definition}
	\label{complex4}
	\normalfont 
	Let $\lambda_1,\dots\lambda_m\colon\gg\to\RR$ be the roots with respect to the Jordan-H\"older series~\eqref{JH}. 
	Using the notation in Definition~\ref{complex2} for $\Vc=\gg$ and $\Vc_j=\gg_j$ for $j=1,\dots,m$, 
	we define for arbitrary $\xi\in\gg^*$
	\begin{equation*}
		(\forall\xi\in\gg^*)\quad 	\bt(\xi):=\{k\in\{1,\dots,d/2\}\mid 
		\pg^{k-1}(B_{\xi})\cap\Ker\lambda_{i_k}=\pg^k(B_{\xi})\}.
	\end{equation*}
Thus, if we denote by $J$ the set of all triples 
$(e,\mathbf{j}, \bt)$, where $e\subseteq \{1,\dots,m\}$, $\card e$ is an even integer, 
$\mathbf{j}\colon\{1,\dots,(\card e)/2\}\to e$ is an injective map,  and
$\bt \subseteq \{1, \dots, (\card e)/2\}$, 
then we obtain from  \eqref{fine} 
the 
\emph{ultrafine layering}
\begin{equation}
	\label{fine1}
	\gg^*=\bigsqcup_{(e,\mathbf{j}, \bt)\in J}\Xi_{(e,\mathbf{j}, \bt)}
\end{equation}
where, again, some sets in the right-hand side may be empty. 
Here 
$$\Xi_{(e,\mathbf{j}, \bt)} := \{\xi \in \Xi_{\mathbf k} \mid \bt(\xi) = \bt\}, $$
where ${\mathbf k}\in J_m$ corresponds to the pair $(e, {\mathbf j})$ as explained 
 above. 
\end{definition}

\begin{remark}
\normalfont 
The layers $\Xi_{(e,\mathbf{j}, \bt)}$ in \eqref{fine1} are $G$-invariant subsets of~$\gg^*$. 
Therefore, we can use 
the quotient map $q\colon \gg^*\to\gg^*/G$ corresponding to the coadjoint action and the Kirillov-Bernat bijection $\kappa\colon\gg^*/G\to\widehat{G}$, 
to
define the partition 
\begin{equation*}
	\widehat{G}=\bigsqcup_{(e, {\mathbf j}, \bt) \in J}
	\Gamma_{(e, \mathbf{j}, \bt) }.
\end{equation*}
where some of the sets 
$\Gamma_{(e, \mathbf{j}, \bt)}:=\kappa(q(\Xi_{(e, \mathbf{j}, \bt)}))$ in the right-hand side may be empty. 
This partition is called the \emph{ultrafine layering} of 
the unitary dual~$\widehat{G}$ and the sets $\Gamma_{\mathbf{k}}$ are called the \emph{ultrafine layers of $\widehat{G}$} with respect to the Jordan-H\"older sequence~\eqref{JH}. 
\end{remark}

The ultrafine layering is built directly from the Jordan-H\"older sequence~\eqref{JH}, which in turn exists since the Lie algebra $\gg$ is completely solvable. 
For more general solvable Lie algebras, such a construction is not so direct as it involves passing to complexifications in order to accommodate the complex-valued roots of the Lie algebra under consideration. 
	In order to be able to use results from the earlier literature, we show  below that these two approaches lead to the same result in the present framework of completely solvable Lie algebras.

\begin{lemma}\label{lemma4.10}
	Let $\lambda_1,\dots\lambda_m\colon\gg\to\RR$ be the roots with respect to the Jordan-H\"older series~\eqref{JH}. 
	For every $\xi\in\gg^*$ we denote by $\xi_\CC\colon\gg_\CC\to\CC$ its $\CC$-linear extension with the corresponding $\CC$-bilinear functional $B_{\xi_\CC}\colon\gg_\CC\times\gg_\CC\to\CC$, $B_{\xi_\CC}(v,w):=\xi_\CC([v,w])$. 
	Using the notation in Definition~\ref{complex2} for $\Vc=\gg$ and $\Vc_j=\gg_j$ for $j=1,\dots,m$, 
	we have 
	\begin{equation*}
		\bt(\xi)
		=
		\{k\in\{1,\dots,d/2\}\mid 
		\pg^{k-1}(B_{\xi_\CC})\cap\Ker\lambda_{i_k,\CC}=\pg^k(B_{\xi_\CC})\}
	\end{equation*}
for every $\xi\in\gg^*$.
\end{lemma}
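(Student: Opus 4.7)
The plan is to deduce the identity set-theoretically by invoking Lemma~\ref{complex3} to pass between $B_\xi$ and $B_{\xi_\CC}$, and Lemma~\ref{complex1} to pass between a real subspace and its complexification. Specifically, apply Lemma~\ref{complex3} with $\Vc=\gg$, $\Vc_j=\gg_j$ and $B=B_\xi$ so that $B_\CC=B_{\xi_\CC}$; this immediately yields, for every $k\in\{1,\dots,d/2\}$, the identifications
\begin{equation*}
\pg^{k-1}(B_{\xi_\CC})=\pg^{k-1}(B_\xi)_\CC,\quad \pg^k(B_{\xi_\CC})=\pg^k(B_\xi)_\CC,\quad i_k(B_{\xi_\CC})=i_k(B_\xi).
\end{equation*}
Hence in both formulations of $\bt(\xi)$ the same index $i_k:=i_k(B_\xi)$ appears, and one can compare the two sets term by term.

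The next step is to express $\Ker\lambda_{i_k,\CC}$ in terms of $\Ker\lambda_{i_k}$. Since $G$ is completely solvable, every root $\lambda_r\colon\gg\to\RR$ is a genuinely real linear functional; consequently its $\CC$-linear extension $\lambda_{r,\CC}\colon\gg_\CC\to\CC$ satisfies
\begin{equation*}
\Ker\lambda_{r,\CC}=(\Ker\lambda_r)_\CC,
\end{equation*}
as one checks by decomposing $v+\ie w\in\Ker\lambda_{r,\CC}$ into real and imaginary parts. Combining this with Lemma~\ref{complex1}\eqref{complex1_item2} (namely \eqref{complex1_eq2}) and the identification $\pg^{k-1}(B_{\xi_\CC})=\pg^{k-1}(B_\xi)_\CC$ gives
\begin{equation*}
\pg^{k-1}(B_{\xi_\CC})\cap\Ker\lambda_{i_k,\CC}
=\pg^{k-1}(B_\xi)_\CC\cap(\Ker\lambda_{i_k})_\CC
=\bigl(\pg^{k-1}(B_\xi)\cap\Ker\lambda_{i_k}\bigr)_\CC.
\end{equation*}

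Finally, using Lemma~\ref{complex1}\eqref{complex1_item2} once more, equality of two $\RR$-linear subspaces of $\gg$ is equivalent to equality of their complexifications (the implication $\Xc_\CC=\Yc_\CC\Rightarrow\Xc=\Yc$ follows from $\Xc=\Xc_\CC\cap\gg$). Applying this to the subspaces $\pg^{k-1}(B_\xi)\cap\Ker\lambda_{i_k}$ and $\pg^k(B_\xi)$ yields
\begin{equation*}
\pg^{k-1}(B_\xi)\cap\Ker\lambda_{i_k}=\pg^k(B_\xi)
\iff
\pg^{k-1}(B_{\xi_\CC})\cap\Ker\lambda_{i_k,\CC}=\pg^k(B_{\xi_\CC}),
\end{equation*}
which is exactly the required equality of the two defining conditions for $k\in\bt(\xi)$, completing the proof.

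There is no substantive obstacle: the only point that genuinely uses the hypothesis is the reality of the roots $\lambda_r$, which is precisely what distinguishes completely solvable Lie algebras from general solvable ones and is what makes $\Ker\lambda_{r,\CC}=(\Ker\lambda_r)_\CC$ hold. All other steps are formal consequences of Lemmas~\ref{complex1} and~\ref{complex3}.
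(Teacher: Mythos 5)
Your proposal is correct and follows essentially the same route as the paper: both invoke Lemma~\ref{complex3}\eqref{complex3_item2} to identify $\pg^{k-1}(B_{\xi_\CC})=\pg^{k-1}(B_\xi)_\CC$, $\pg^k(B_{\xi_\CC})=\pg^k(B_\xi)_\CC$ and the indices $i_k$, note $(\Ker\lambda_{i_k})_\CC=\Ker\lambda_{i_k,\CC}$ (where the reality of the roots is exactly the completely solvable input), and then conclude via \eqref{complex1_eq1}--\eqref{complex1_eq2} of Lemma~\ref{complex1}. Your write-up merely makes explicit a couple of steps the paper leaves implicit.
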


\begin{proof}
For arbitrary $k\in\{1,\dots,d/2\}$ 
we have 
$$\pg^{k-1}(B_\xi)_\CC=\pg^{k-1}(B_{\xi_\CC})
\text{ and }
\pg^k(B_\xi)_\CC=\pg^k(B_{\xi_\CC})$$
by  Lemma \ref{complex3}\eqref{complex3_item2}.  
On the other hand 
$(\Ker\lambda_{i_k})_\CC=\Ker\lambda_{i_k,\CC}$.  
Therefore, using equations \eqref{complex1_eq1}--\eqref{complex1_eq2} in Lemma~\ref{complex1}, 
we obtain
$$\pg^{k-1}(B_\xi)\cap\Ker\lambda_{i_k}=\pg^k(B_\xi)
\iff 
\pg^{k-1}(B_{\xi_\CC})\cap\Ker\lambda_{i_k,\CC}=\pg^k(B_{\xi_\CC})
$$ 
which implies the assertion.
\end{proof}

\begin{remark}\label{currey}
	\normalfont
 Lemma~\ref{lemma4.10} above shows that 
	the ultrafine layering 
	from \cite[Th. 2.8]{Cu92a} and and \cite[Thm.1.5 and 3.1]{Cu92b} 
	coincides with the one introduced in Definition~\ref{complex4} above.
\end{remark}

\begin{theorem}
	\label{finetriv}
	If $G$ is a completely solvable Lie group and $\Sc$ is a subquotient of $C^*(G)$ such that 
	$\widehat{\Sc}$ is included in one of the ultrafine layers 
	with respect to a Jordan-H\"older sequence, then 
	$\Sc\in \Subquot^{\Tr}(C^*(G))$ and 
	there exists a $*$-isomorphism $\Sc\simeq \Cc_0(\widehat{\Sc},\Kc(\Hc))$ 
	for a suitable separable Hilbert space~$\Hc$. 
	\end{theorem}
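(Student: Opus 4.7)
The plan is to verify the hypotheses of Corollary~\ref{cont-field} on the ultrafine layer containing $\widehat{\Sc}$, and then trivialize the resulting continuous field of Hilbert spaces via Remark~\ref{e-contfield}. Let $(e,\mathbf{j},\bt)\in J$ be the triple indexing the ultrafine layer $\Gamma_{(e,\mathbf{j},\bt)}$ with $\widehat{\Sc}\subseteq\Gamma_{(e,\mathbf{j},\bt)}$, and denote $\Gamma:=\widehat{\Sc}$.

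First I would dispose of the degenerate case $e=\emptyset$: then necessarily $\bt=\emptyset$, the preimage $(q\circ\kappa)^{-1}(\Gamma_{(\emptyset,\mathbf{j},\emptyset)})$ is contained in $[\gg,\gg]^\perp$ and consists of fixed points of the coadjoint action, so the associated irreducible representations are one-dimensional characters of~$G$. The subquotient $\Sc$ is then commutative with Hausdorff spectrum, hence $\Sc\simeq \Cc_0(\Gamma)=\Cc_0(\Gamma,\Kc(\CC))$, and $\Sc\in\Subquot^{\Tr}(C^*(G))$ trivially.

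In the generic case $e\ne\emptyset$, I rely on two inputs about the ultrafine layering. First, \cite[Thm.~1.5 and~3.1]{Cu92b}, together with Lemma~\ref{lemma4.10} and Remark~\ref{currey} identifying the ultrafine layering of Definition~\ref{complex4} with the one used by Currey, show that the full ultrafine layer $\Gamma_{(e,\mathbf{j},\bt)}$ lies in $\LC^{\Tr}(\widehat{G})$; since $\Gamma$ is a locally closed subset thereof, it also lies in $\LC^{\Tr}(\widehat{G})$, and consequently $\Sc\in\Subquot^{\Tr}(C^*(G))$. Second, the continuous selection of polarizations constructed in \cite[Sect.~6]{BB24} on each ultrafine layer provides a continuous section $\sigma\colon\Gamma\to q^{-1}(\Gamma)\subseteq \gg^*\setminus[\gg,\gg]^\perp$ of the coadjoint quotient map, together with a continuous map $\pg\colon\Gamma\to\Gr_{\alg}(\gg)$ such that $\pg(\gamma)$ is a Puk\'anszky polarization at $\sigma(\gamma)$ for every $\gamma\in\Gamma$. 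These two inputs together verify all items of Hypothesis~\ref{contpolar-hyp} for $\Gamma$, so Corollary~\ref{cont-field} applies and yields a $*$-isomorphism
$$\Sc\mathop{\longrightarrow}\limits^{\sim}\Ac(\Hb_\Gamma),\quad f\mapsto (\pi_\gamma(f))_{\gamma\in\Gamma}.$$

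To finish, I would invoke Remark~\ref{e-contfield}: the space $\Gamma$ is paracompact with finite covering dimension since it is the spectrum of the continuous-trace separable $C^*$-algebra~$\Sc$ and, as a locally closed subset of the unitary dual of a solvable Lie group, has finite covering dimension by the arguments used in the proof of Lemma~\ref{equiv}. Hence the continuous field $\Hb_\Gamma$ is trivial, which gives $\Ac(\Hb_\Gamma)\simeq \Cc_0(\Gamma,\Kc(\Hc))$ for a fixed separable Hilbert space~$\Hc$ (infinite-dimensional, since the representations $\pi_\gamma$ are infinite-dimensional whenever $e\ne\emptyset$).

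The main obstacle, and the reason this theorem is nontrivial despite Corollary~\ref{cont-field} being already in hand, is guaranteeing that the continuous selections of \cite[Sect.~6]{BB24} are adapted precisely to the layers $\Xi_{(e,\mathbf{j},\bt)}$ defined here: the construction in \cite{BB24} and the continuous-trace result of \cite{Cu92b} are formulated using a stratification defined via complex polarizations and complex-valued roots, whereas Definition~\ref{complex4} uses real polarizations adapted to a Jordan--H\"older sequence. The reconciliation of these two viewpoints is exactly the content of Lemma~\ref{lemma4.10} (hence Remark~\ref{currey}); once this identification is in place, the theorem follows by assembling the pieces above.
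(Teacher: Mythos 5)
Your proposal is correct and follows essentially the same route as the paper: split off the commutative layer, verify Hypothesis~\ref{contpolar-hyp} via the identification of the real and complex ultrafine layerings (Lemma~\ref{lemma4.10}/Remark~\ref{currey}) together with Currey's continuous-trace and cross-section results and the continuity of the Vergne polarization from \cite{BB24}, then apply Corollary~\ref{cont-field} and the triviality of the field. The only (cosmetic) difference is that you attribute the continuous section $\sigma$ on the layer to \cite[Sect.~6]{BB24}, whereas the paper takes it from \cite[Th.~2.8]{Cu92a}/\cite{CuPn89} and uses \cite[Th.~6.5]{BB24} only for the continuity of $\xi\mapsto\pg_{\alg}(\xi)$ on fine layers.
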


\begin{corollary}
	\label{solv}
	The $C^*$-algebra of every completely solvable Lie group is solvable. 
\end{corollary}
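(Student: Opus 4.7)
The plan is to derive Corollary~\ref{solv} directly from Theorem~\ref{finetriv} by exhibiting a composition series of $C^*(G)$ whose successive quotients each have spectrum equal to a single ultrafine layer of $\widehat{G}$. Concretely, the ultrafine layering of $\widehat{G}$ indexed by the finite set $J$ partitions the dual into finitely many pieces $\Gamma_{(e,\mathbf{j},\bt)}$. By the standard correspondence between open subsets of $\widehat{G}$ and closed two-sided ideals of $C^*(G)$, a chain of open subsets of $\widehat{G}$ produces a chain of closed two-sided ideals of $C^*(G)$ whose successive subquotients have spectra equal to the successive differences.

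My first step is thus to equip the finite index set $J$ with a total order $\prec$ with the property that for every $\alpha \in J$ the union $U_\alpha := \bigcup_{\beta\preceq\alpha} \Gamma_\beta$ is an open subset of $\widehat{G}$; equivalently, each $\Gamma_\alpha$ is an open (hence locally closed) subset of $\widehat{G}\setminus\bigcup_{\beta\prec\alpha} \Gamma_\beta$. A convenient order is the lexicographic one on the multi-index $(e,\mathbf{j},\bt)$, following the stratification results of \cite{Cu92a,Cu92b} (see Remark~\ref{currey}); one checks directly from the definitions of $\bt(\xi)$ and of the $j_k(B_\xi)$, using semicontinuity of the rank of $B_\xi\vert_{\gg_j\times\gg_j}$ in $\xi$ and of the auxiliary subspaces $\pg^k(B_\xi)$, that passing to a layer $\Gamma_\alpha$ with larger index increases the dimension (or codimension) of certain distinguished subspaces only on a closed subset, so $\Gamma_\alpha$ is open in the complement of the lexicographically smaller layers. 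Enumerate the nonempty layers in this order as $\Gamma_1,\dots,\Gamma_N$ and set $U_j := \Gamma_1\cup\cdots\cup\Gamma_j$.

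Next, I translate this to ideals. For each $j$ let $\Jc_j \subseteq C^*(G)$ be the closed two-sided ideal associated to the open subset $U_j\subseteq \widehat{G}$, so that $\{0\}=\Jc_0\subseteq \Jc_1\subseteq\cdots\subseteq\Jc_N=C^*(G)$, and the subquotient $\Sc_j := \Jc_j/\Jc_{j-1}$ satisfies $\widehat{\Sc_j}\simeq U_j\setminus U_{j-1} = \Gamma_j$, which is contained in a single ultrafine layer of $\widehat{G}$. By Theorem~\ref{finetriv} applied to $\Sc_j$, there exists a separable Hilbert space $\Hc_j$ and a $*$-isomorphism
\[
\Sc_j \simeq \Cc_0(\Gamma_j,\Kc(\Hc_j)).
\]
This is exactly the form required in the definition of a solvable $C^*$-algebra, completing the proof.

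The main obstacle I anticipate is establishing the topological claim that the lexicographic ordering on $J$ actually makes each $U_\alpha$ open in $\widehat{G}$, equivalently, that each $\Gamma_\alpha$ is closed in $U_\alpha$. This reduces, via $\kappa$ and $q$, to a statement on $\gg^*$ about the joint semicontinuity of the discrete invariants $(e,\mathbf{j},\bt)$ attached to a coadjoint orbit; the invariants $e$ and $\mathbf{j}$ are governed by upper semicontinuity of $\xi\mapsto \dim\gg_j(\xi\vert_{\gg_j})$, which is classical, while the behaviour of $\bt$ is controlled by the definition of $\pg^k(B_\xi)$ recalled in Definition~\ref{complex2} together with the root data entering Definition~\ref{complex4}. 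Once this stratification property is in hand, the rest of the argument is purely formal and reduces the corollary to an application of Theorem~\ref{finetriv} layer by layer.
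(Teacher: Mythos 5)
Your proposal is correct and follows essentially the same route as the paper: enumerate the finitely many ultrafine layers so that the cumulative unions are open, pass via the ideal--open set correspondence to a chain $\{0\}=\Jc_0\subseteq\cdots\subseteq\Jc_N=C^*(G)$, and apply Theorem~\ref{finetriv} to each subquotient. The only difference is that where you sketch a lexicographic-order/semicontinuity verification of the openness of the partial unions, the paper obtains the required increasing chain of $G$-invariant open subsets of $\gg^*$ with differences contained in ultrafine layers directly from Currey's results (\cite[Th. 3.1]{Cu92b} via Remark~\ref{currey}), which is also the safer way to justify that step rather than relying on the unchecked lexicographic argument.
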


\begin{remark}
	\normalfont 
	In the special case of nilpotent Lie groups, the ultrafine layering coincides with the fine layering. 
	Therefore, in this case, the result of Corollary~\ref{solv} 
	coincides with the result obtained earlier earlier in \cite[Th. 4.11]{BBL16} with a completely different proof. 
\end{remark}

\begin{proof}[Proof of Theorem~\ref{finetriv}]
We denote $\Gamma:=\widehat{\Sc}$. 
Since $\Gamma$ is contained in a ultrafine layer, we have either $(q\circ\kappa)^{-1}(\Gamma)\subseteq[\gg,\gg]^\perp$ or $(q\circ\kappa)^{-1}(\Gamma)\subseteq\gg^*\setminus[\gg,\gg]^\perp$. 
In the first of these cases the subquotient $\Gamma$ is commutative, so the assertion follows at once. 

Now let us assume $(q\circ\kappa)^{-1}(\Gamma)\subseteq\gg^*\setminus[\gg,\gg]^\perp$. 
We check that all the conditions in Hypothesis~\ref{contpolar-hyp} are satisfied. 

We first 
recall from Remark~\ref{currey} that, 
by Lemma~\ref{lemma4.10}, 
the ultrafine layering 
from \cite[Th. 2.8]{Cu92a} and \cite[Thm.1.5 and 3.1]{Cu92b} coincides with the one introduced in Definition~\ref{complex4} 
above.

Condition~\eqref{contpolar-hyp_item1} follows from \cite[Thm.~3.1]{Cu92b}
and Remark~\ref{currey}.
Condition~\eqref{contpolar-hyp_item2} follows again by Remark~\ref{currey}
and \cite[Th. 2.8]{Cu92a} or \cite{CuPn89}, 
which shows that 
every ultrafine layer in $\gg^*$ has a cross-section.

For condition~\eqref{contpolar-hyp_item2} we note that $\pg_\alg
\colon\gg^*\to\Gr(\gg)$ is  the Vergne polarization mapping.
Thus for every $\xi$,  $\pg_{\alg}(\xi)$ is a real polarization at $\xi$  that satisfies Puk\'anszky's condition. 
(See \cite[Th. 4.3.6 and Cor. 4.3.7]{BCDLRRV72} or \cite[Cor. 3.2]{Grl92}.) 
On the other hand, the mapping $\xi \to \pg_\alg(\xi)$ is continuous on every fine layer $\Xi_{\mathbf{k}}\subseteq\gg^*$ by~\cite[Th. 6.5]{BB24}, hence also on $\Gamma$. 

Thus all the conditions in Hypothesis~\ref{contpolar-hyp} are satisfied, and then Corollary~\ref{cont-field} is applicable. 
Therefore $\Sc$ is $*$-isomorphic to the $C^*$-algebra of sections of a trivial continuous field of infinite-dimensional separable Hilbert spaces. 
This directly implies the assertion. 
\end{proof}

\begin{proof}[Proof of Corollary~\ref{solv}]
We fix a Jordan-H\"older sequence in $\gg$. 
By Remark~\ref{currey} and \cite[Th. 3.1]{Cu92b}, there exists a finite increasing family of  $G$-invariant dense open subsets 
$$\emptyset=\Omega_0\subseteq\Omega_1\subseteq\cdots\subseteq\Omega_N=\gg^*$$
such that 
 for every $r\in\{1,\dots,N\}$  there exists $(e,\mathbf{j}, \bt)\in J$ with $\Omega_r\setminus\Omega_{r-1}\subseteq\Xi_{(e,\mathbf{j}, \bt)}$. 

For $r=0,\dots,N$, let $\Jc_r\subseteq C^*(G) $ be the closed two-sided ideal corresponding to the open subset $\kappa(q(\Omega_r))\subseteq\widehat{G}$, 
that is, $\widehat{\Jc_r}=\kappa(q(\Omega_r))$. 
Then we have 
$$\{0\}=\Jc_0\subseteq\Jc_1\subseteq\cdots\subseteq\Jc_N=C^*(G).$$ 
Moreover, denoting $\Sc_r:=\Jc_r/\Jc_{r-1}$, we have $\widehat{\Sc_r}=\kappa(q(\Omega_r\setminus\Omega_{r-1}))$ is contained in an ultrafine layer.  
Therefore, by using Theorem~\ref{finetriv},  
we obtain  a $*$-isomorphism $\Sc_r\simeq \Cc_0(\widehat{\Sc},\Kc(\Hc_r))$ 
for a suitable separable Hilbert space~$\Hc_r$. 
This shows that the $C^*$-algebra $C^*(G)$ is solvable. 
\end{proof}

\subsection*{Acknowledgment}
We wish to thank the Referee for several pertinent remarks that improved the exposition of our paper.

\end{document}